\let\Oldsection\section
\renewcommand{\section}{\FloatBarrier\Oldsection}
\let\Oldsubsection\subsection
\renewcommand{\subsection}{\FloatBarrier\Oldsubsection}
\let\Oldsubsubsection\subsubsection
\renewcommand{\subsubsection}{\FloatBarrier\Oldsubsubsection}
\def\input@path{{./}}
\newcommand{\newreptheorem}[2]{
    \newtheorem*{rep@#1}{\rep@title}
    \newenvironment{rep#1}[1]{
        \def\rep@title{#2 \ref*{##1}}
        \begin{rep@#1}
    }{
        \end{rep@#1}
    }
}
\theoremstyle{plain}%
\newtheorem{theorem}{Theorem}[section]
\newtheorem{proposition}[theorem]{Proposition}%
\newtheorem{lemma}[theorem]{Lemma}
\newtheorem*{theorem*}{Theorem}
\theoremstyle{remark}%
\newtheorem{remark}{Remark}
\theoremstyle{definition}%
\newtheorem{definition}{Definition}%
\newenvironment{delayedproof}[1]
 {\begin{proof}[\raisedtarget{#1}Proof of~\Cref{#1}]}
 {\end{proof}}
\newcommand{\raisedtarget}[1]{%
  \raisebox{\fontcharht\font`P}[0pt][0pt]{\hypertarget{#1}{}}%
}
\newcommand{\proofref}[1]{\hyperlink{#1}{Proof of~\Cref{#1}}}
\newcommand\keywords{%
\noindent\\\textbf{Keywords:}\ }
\newcommand{\morph}{\mathrm{morph}}
\newcommand{\obj}{\mathrm{obj}}
\newcommand{\F}{\mathcal{F}}
\newcommand{\cat}[1]{\mathbf{#1}}
\NewDocumentCommand\injto{g}{%
	\ensuremath{\overset{\IfNoValueTF{#1}{}{#1}}{\rightarrowtail}}%
}
\NewDocumentCommand\simrel{g}{%
	\ensuremath{\overset{\IfNoValueTF{#1}{}{#1}}{\sim}}%
}
\NewDocumentCommand\equivrel{g}{%
	\ensuremath{\overset{\IfNoValueTF{#1}{}{#1}}{\equiv}}%
}
\newcommand{\mun}{^{-1}}
\newcommand{\bs}{\backslash}
\newcommand{\tx}[1]{\text{#1}}
\newcommand{\half}{\dfrac{1}{2}}
\newcommand{\R}{\mathbb{R}}
\newcommand{\N}{\mathbb{N}}
\newcommand\id{id}
\newcommand{\equi}{\Longleftrightarrow}
\DeclareMathOperator{\im}{im}	
\definecolor{MyGreen}{RGB}{0, 105, 50}
\newcommand{\add}[1]{{#1}}
\newcommand{\addd}[1]{{#1}}
\newcommand\subsetsim{\mathrel{%
  \ooalign{\raise0.2ex\hbox{$\subset$}\cr\hidewidth\raise-0.8ex\hbox{\scalebox{0.9}{$\sim$}}\hidewidth\cr}}}
\NewDocumentCommand\V{mggg}{%
	\ensuremath{\begin{pmatrix}#1\IfNoValueTF{#2}{}{\\#2\IfNoValueTF{#3}{}{\\#3\IfNoValueTF{#4}{}{\\#4}}}\end{pmatrix} }
}
\NewDocumentCommand\twocol{mmgg}{%
	\begin{columns}
		\begin{column}{\IfNoValueTF{#3}{0.5}{#3}\textwidth}
			#1
		\end{column}
		\begin{column}{\IfNoValueTF{#4}{0.5}{#4}\textwidth}  
			#2
		\end{column}
	\end{columns}
}
\NewDocumentCommand\threecol{mmmggg}{%
	\begin{columns}
		\begin{column}{\IfNoValueTF{#4}{0.32}{#4}\textwidth}
			#1
		\end{column}
		\begin{column}{\IfNoValueTF{#5}{0.32}{#5}\textwidth}
			#2
		\end{column}
		\begin{column}{\IfNoValueTF{#6}{0.32}{#6}\textwidth}
			#3
		\end{column}
	\end{columns}
}
\NewDocumentCommand\twocolreport{mmgg}{%
	\begin{minipage}{\textwidth}
		\begin{minipage}[!t]{\IfNoValueTF{#3}{0.5}{#3}\textwidth}
			#1
		\end{minipage}
		\begin{minipage}[!t]{\IfNoValueTF{#4}{0.5}{#4}\textwidth}
			#2
		\end{minipage}%
	\end{minipage}
}
\begin{document}

\title{Stability and Extension of Steady and Ranging Persistence}
\author[1]{{Yann-Situ} {Gazull} \thanks{yann-situ.gazull(at)univ-amu.fr}\orcidlink{0009-0003-8539-9275}}
\affil[1]{{Aix Marseille Univ, CNRS, LIS}, {{Marseille}, {France}}}
\date{November 2025}
\maketitle

\abstract{Persistent homology is a topological data analysis tool that has been widely generalized, extending its scope beyond the field of topology.
Among its extensions, steady and ranging persistence were developed to study a wide variety of graph properties.
Precisely, given a feature of interest on graphs, it is possible to build two types of persistence (steady and ranging persistence) that follow the evolution of the feature along graph filtrations.
This study extends steady and ranging persistence to other objects using category theory and investigates the stability of such persistence.
In particular, a characterization of the features that induce balanced steady and ranging persistence is provided.
The main results of this study are illustrated using a practical implementation for hypergraphs.}

\keywords{persistence, categorical persistence, generalized persistence, hypergraph, stability}

\section{Introduction}
Topological data analysis has been extensively developed in recent years.
Persistent homology, one of its main tools, has become a significant research topic.
For this reason, persistent homology has been extended in several ways.

A natural extension of persistent homology is to ease the restriction that a filtration is an increasing sequence of objects.
One such extension is multidimensional persistent homology~(\cite{carlsson-multidimensional-persistence,biasotti-multidimensional-size-function}), which aims to generalize persistent homology to multidimensional filtrations and is currently a fruitful topic with applications in topological data analysis.
Another is zigzag persistence~(\cite{carlsson-zigzag}), where the maps between two consecutive spaces in the filtration can point in either direction.

A theoretical generalization of persistent homology was to theoretically develop the concept of persistence.
This induced the generalization of persistence using category theory~(\cite{bubenik-categorification-persistent-homology,bubenik-metrics-generalized-persistence,patel-generalized-diagrams}).
Within this new paradigm, concepts related to homology have been replaced by other abstract categorical concepts, and the focus has moved from homology to the persistence function itself.
From this viewpoint, persistence is seen as a process that takes a filtration and produces a persistence diagram by analyzing how a specific concept persists along the filtration.

Another extension was driven by the will to use a persistence diagram on objects other than topological sets and for features other than homology, typically to study certain properties of graphs.
An approach is to build a topological complex whose homology corresponds to the characteristics studied (see, for example,~\cite{grigoryan-homology-digraph-2021,grigoryan-path-complexes-2020,grigoryan-graphs-simplical-complexes-2014,bressan-homology-hypergraph}).
Another approach is to use the aforementioned categorical persistence framework to study persistent features without using homology and topological objects.
An example is the study of persistent hypergraph attributes \add{in~\cite{liu-persistent-spectral-hypergraph-learning,liu-hypergraph-homology} and in~\cite{babu-persistent-hypergraph}}. 
Rank-based persistence~(\cite{bergomi-rank-based-persistence}) and graph persistence~(\cite{bergomi-exploring-graph-persistence}) were also developed in this context, as well as steady and ranging persistence~(\cite{bergomi-steady-ranging}), which constitutes the starting point of the present work.

Steady and ranging persistence is a method developed by Bergomi et al. to study the evolution of features over graphs (or digraphs) along a filtration.
Specifically, given a feature on graphs (e.g., Eulerian sets or hubs in~\cite{bergomi-steady-ranging}), they define two ways of studying the persistence of this feature, namely steady and ranging persistence.
In their work, they presented some examples of graph features and studied the stability of steady and ranging persistence induced by these features.
\smallskip

Our theoretical work is twofold.
First, we use category theory to extend the steady and ranging persistence paradigm to a larger variety of objects (e.g., hypergraphs).
\add{Second, we address the problem of stability in the context of steady and ranging persistence.
Precisely, we extend the notion of balancedness and discuss its relationship with stability.
We then provide a characterization of features that induce balanced steady (respectively, ranging) persistence (see our main result~\Cref{coro:convex-balanced-equivalence}).}

The remainder of this paper is organized as follows.
In~\Cref{sect:prior-works}, a mathematical background is introduced.
Precisely, we present categorical persistence, ip-generators and steady and ranging persistence for graphs.
Then we extend the scope of steady and ranging persistence in~\Cref{sect:extending-steady-ranging}.
\add{In particular, we investigate to what extent balancedness is equivalent to stability in this context.}
In~\Cref{sect:convex-balanced}, convex features are introduced, and our main result (which is the characterization of balanced features) is established.
Finally, in~\Cref{sect:examples-constructions-hypergraphs}, we present experiments on hypergraphs to illustrate our two theoretical contributions.

\section{Related Works}\label{sect:prior-works}
This section introduces previous works that are important for the present work, and is organized as follows:
we first introduce the categorical persistence framework, then provide useful definitions concerning the stability of persistence, and finally, we present the steady and ranging persistence theory for graphs, which corresponds to the starting point of our work.

\subsection{Categorical persistence}
    In the categorical persistence framework, category theory is used to define the generalization of filtrations (which are no longer growing topological objects), persistence functions (which correspond to persistence diagrams), and ip-generators (which are functions that associate a filtration with a persistence diagram).
    
    Categorical persistence has been formalized in various studies (see, for example,~\cite{bubenik-categorification-persistent-homology}). Here we follow the formalism used in~\cite{bergomi-steady-ranging}.
    In this study, we do not introduce category theory.
    See~\cite{vanoosten-basic-category-theory,leinster-basic-category-theory} for an introduction to category theory.
    
    In the present work, we mainly consider categories where every morphism is a monomorphism.
    Such categories will be called \emph{mono categories}.
    Given a category $\cat{C}$, we denote $\cat{C}_m$ as the category whose objects are the objects of $\cat{C}$ but whose morphisms are the monomorphisms of $\cat{C}$.
    In particular, we write $\R_m$ for the category whose objects are real numbers and arrows correspond to the $\leq$ relation (usually denoted $(\R,\leq)$).
    We use the notation $X\injto{\iota}X'$ for a monomorphism $\iota$ from $X$ to $X'$.
    \smallskip
    
    \begin{definition}[persistence function~{\cite[Definition 1]{bergomi-steady-ranging}}]\label{def:persistence-function}
        A \emph{persistence function} is a function $p:\morph(\R_m)\to \N$ that satisfies the following properties:
        for every $u_1\leq u_2\leq v_1\leq v_2$:
        \begin{enumerate}
            \item $p(u_1\leq v_1)\leq p(u_2\leq v_1)$
            \item $p(u_2\leq v_2)\leq p(u_2\leq v_1)$
            \item $p(u_2\leq v_1)-p(u_1\leq v_1)\geq p(u_2\leq v_2)-p(u_1\leq v_2)$
        \end{enumerate}
    \end{definition}
    Persistence functions are as informative as persistence diagrams.
    Indeed, persistence functions give rise to - and can be recovered from - persistence diagrams (see~\Cref{fig:persistence-diagram-example}).
    See~\cite{cohen-steiner-stability-persistence} for the formal definition of a persistence diagram.
    \addd{By convention, we set $p(-\infty\leq v) := \lim_{u\to -\infty}p(u \leq v)$ and $p(u \leq +\infty) := \lim_{v\to +\infty}p(u \leq v)$, which are well-defined by \emph{1.} and \emph{2.} of~\Cref{def:persistence-function}.}
    To recover the multiset definition of a persistence diagram, we can define the multiplicity as follows:
    \begin{definition}[{\cite{bergomi-rank-based-persistence}}, Definition 5]\label{def:persistence-multiplicity}
      Given $u < v \in \R \cup \{-\infty, +\infty\}$ we define the multiplicity $\mu(u,v)$ of $(u, v)$ as the minimum of the following expression, over $I_u$ , $I_v$ disjoint connected neighborhoods of $u$ and $v$ respectively:
      $$p(\sup I_u\leq \inf I_v) - p(\inf I_u\leq \inf I_v) -  p(\sup I_u\leq \sup I_v) + p(\inf I_u\leq \sup I_v)$$
      A point $(u,v)$ is said to be a \textit{cornerpoint} of $p$ if $\mu(u,v)>0$.
      By convention we also define $\mu(u,u) = +\infty$.
    \end{definition}
    \addd{\begin{remark}\label{rem:finite-multiplicities}
      The quantity in~\Cref{def:persistence-multiplicity} is increasing in both $I_u$ and $I_v$ (with respect to inclusion), so the minimum is achieved for sufficiently small intervals $I_u$ and $I_v$.
      In addition,~\Cref{def:persistence-function} prevents $p(u\leq v)$ from taking infinite values.
      It follows that every cornerpoint outside of the diagonal $y=x$ has finite multiplicity.
    \end{remark}}
    
    \begin{definition}[bottleneck distance,~\cite{chazal-proximity-persistence-module}]\label{def:bottleneck-distance}
        Given two persistence functions $p$ and $p'$, we denote $Dgm(p)$ and $Dgm(p')$ their corresponding diagram seen as multisets and $\Gamma$ the set of all bijections from $Dgm(p)$ to $Dgm(p')$.
        The \emph{bottleneck distance} $d_B(p,p')$ between $p$ and $p'$ of is defined as
        $$ d_B(p,p') = \inf_{\gamma\in\Gamma}\sup_{c\in Dgm(p)}||c - \gamma(c) ||_\infty $$
    \end{definition}
    \begin{figure}[!htb]
        \centering
        \includegraphics[width = 0.32\textwidth]{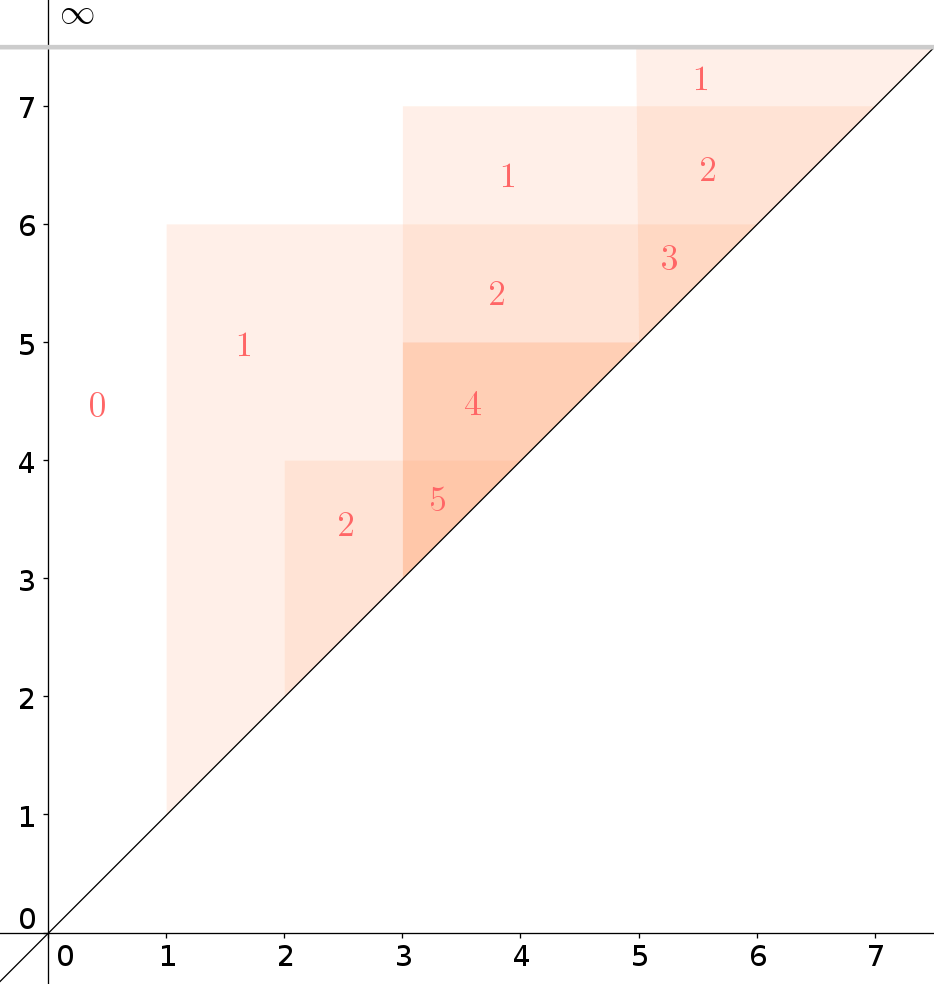}
        \hspace{0.12\textwidth}
        \includegraphics[width = 0.32\textwidth]{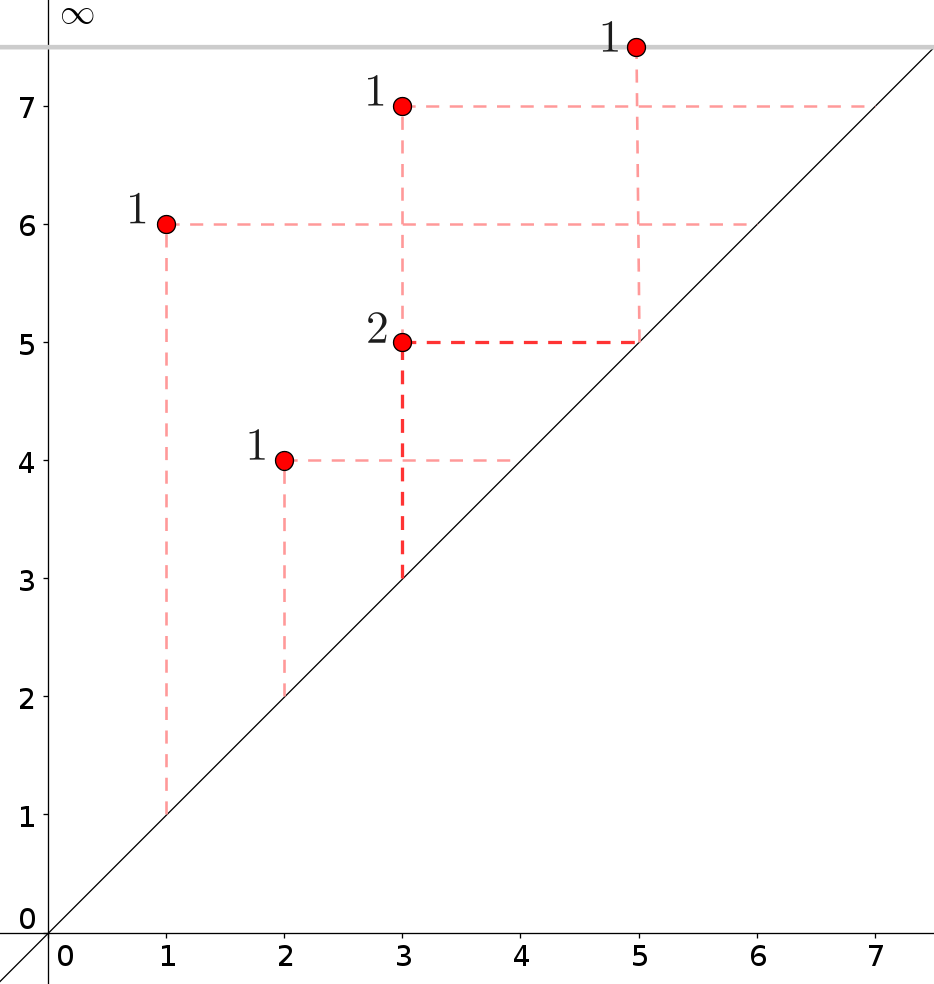}
          \caption{\add{A persistence function (on the left) and its representation as a persistence diagram (on the right).
          \addd{In both images, a point $(u,v)$ (above the diagonal $y=x$) corresponds to an element $(u\leq v)$ of $\morph(\R_m)$.}
          In the right diagram, the multiplicity is written above each cornerpoint.}}
          \label{fig:persistence-diagram-example}
    \end{figure}
    Filtration is an important component of persistence.
    In a persistent homology setting, filtrations are growing sequences of topological objects.
    In this categorical setting, filtrations are defined as functors from ordered real numbers to the category of interest.
    \begin{definition}[filtration]
       A \emph{filtration} of a mono category $\cat C_m$ is a functor $F:\R_m \to\cat C_m$.
       For every $u,v\in\R$, we will write $F_u:=F(u)$ and $F_{u}^{v}:= F(u\leq v)$.
       Note that we have $F_{v}^{w}F_{u}^{v}=F_{u}^{w}$.   
    \end{definition}
    \begin{remark}
        In~\cite{bubenik-categorification-persistent-homology}, filtrations are referred to as $(\R,\leq)$-diagrams.
    \end{remark}
    \smallskip
    
    \add{
    An example is the filtration built from a weighted graph:
    \begin{definition}[weighted graph filtration]\label{def:weighted-graph}
        Consider the category $\cat{Gph}_m$ of undirected graphs with monomorphisms.
        A \textit{weighted graph} is a pair $(G,f)$ where $G = (V,E)$ is a graph (with $V$ the vertices and $E$ the edges) and $f$ is a function from $V\sqcup E$ to $\R$ such that $f(v)\leq f(e)$ whenever $v$ is a vertex of the edge $e$.
        
        A weighted graph induces a filtration $F$ of $\cat{Gph}_m$:
        \begin{align*}
        F:\ \quad u\quad &\mapsto\ F_u\ :=\ \left(f^{-1}(]-\infty,u])\cap V,\ f^{-1}(]-\infty,u])\cap E\right) \\
         (u\leq v) &\mapsto (F_u \subseteq F_v)
        \end{align*}
        where $(F_u \subseteq F_v)$ is the morphism that maps each vertex (or edge) in $F_u$ to the same vertex (or edge) in $F_v$.
    \end{definition}
    }

    \begin{definition}[ip-generator~\add{\cite[Definition 5]{bergomi-steady-ranging}}]\label{def:ip-generator}
       An \emph{ip-generator} (\textit{indexed-aware persistence function generator}) $\pi$ for $\cat C_m$ is a function that assigns a persistence function $\pi(F)$ to every filtration $F$ of $\cat C_m$\add{, such that $\pi(F) = \pi(F')$ whenever there exists a natural isomorphism between $F$ and $F'$ (see~\cite{leinster-basic-category-theory} for the definition of a natural isomorphism).}
    \end{definition}

    \paragraph{Example}
    A common type of ip-generator is obtained using the notion of categorical persistence function~{\cite[Definition 3]{bergomi-rank-based-persistence}}:
        a \emph{categorical persistence function} of the category  $\cat{C}_m$ is a function $p:\morph(\cat{C}_m)\to \N$ that satisfies the following properties:
        for every $u_1\injto u_2\injto v_1\injto v_2$:
        \begin{enumerate}
            \item $p(u_1\injto v_1)\leq p(u_2\injto v_1)$
            \item $p(u_2\injto v_2)\leq p(u_2\injto v_1)$
            \item $p(u_2\injto v_1)-p(u_1\injto v_1)\geq p(u_2\injto v_2)-p(u_1\injto v_2)$
        \end{enumerate}

    Given a categorical persistence function $p$ for $\cat C_m$ it is possible to define its corresponding ip-generator $\pi_p:F\mapsto p \circ F$, which takes a filtration $F$ of $\cat C_m$ and provides the persistence function $p\circ F$.
    Hence, the persistent homology setting of dimension $q$ can be recovered in $\cat C_m = (\cat{Top}, \subseteq)$ (the category of topological objects with inclusions), using the ip-generator $\pi_{h_q}:F\mapsto h_q \circ F$ obtained  from the following categorical persistence function:
    $$h_q: (F_u\subseteq F_v) \mapsto dim\big( im \left(H_q(F_u) \rightarrow H_q(F_v) \right)\big)$$
    where $F_u$ and $F_v$ are topological sets (with finitely generated homology groups) and $H_q(F_u) \rightarrow H_q(F_v)$ is the homology group morphism induced by the inclusion $F_u\subseteq F_v$.
    See~\cite{bubenik-categorification-persistent-homology} for more details on how to recover standard persistent homology with categorical formalism.
    See~\cite{bergomi-rank-based-persistence} for a generalization of the construction of categorical persistent functions similar to $h_q$, using the notion of \textit{rank functions}.

\subsection{Stability of persistence}
    One of the main advantages of persistent homology is that persistent diagrams are stable.
    The standard stability theorem was presented for persistent homology in~\cite{cohen-steiner-stability-persistence} and generalized in~\cite{chazal-proximity-persistence-module}:
    \begin{theorem}[stability of persistent homology,~{\cite{cohen-steiner-stability-persistence}}]\label{thm:stability-persistent-homology}
        Given $X$ a triangulable space with continuous tame functions $f$ and $g$ from $X$ to $\R$, we have:
        $$d_B (Dgm(f), Dgm(g)) \leq ||f - g||_\infty$$
        where $d_B$ is the bottleneck distance (or matching distance) and $Dgm(f)$ and $Dgm(g)$ are the persistence diagrams obtained by the filtrations induced by $f$ and $g$.
    \end{theorem}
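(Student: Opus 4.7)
The plan is to follow the classical Cohen-Steiner--Edelsbrunner--Harer strategy, splitting the proof into an \emph{interleaving step} and a \emph{matching step}, both phrased in the persistence-function language of \Cref{def:persistence-function} and \Cref{def:persistence-multiplicity}.

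Setting $\varepsilon := \|f-g\|_\infty$, I would first observe that $f(x)\leq g(x)+\varepsilon$ for every $x\in X$, which yields the inclusion $f^{-1}(-\infty,a]\subseteq g^{-1}(-\infty,a+\varepsilon]$ for every $a\in\R$, and symmetrically with the roles of $f$ and $g$ swapped. Applying the functor $H_q$ and the rank construction $h_q$ recalled in the example after \Cref{def:ip-generator} produces, for all $u\leq v$, the two inequalities
$$p_f(u\leq v)\ \leq\ p_g(u-\varepsilon\leq v+\varepsilon) \quad\text{and}\quad p_g(u\leq v)\ \leq\ p_f(u-\varepsilon\leq v+\varepsilon),$$
where $p_f$ and $p_g$ denote the persistence functions induced by the sublevel-set filtrations of $f$ and $g$. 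This is a formal consequence of functoriality of singular homology together with the monotonicity built into $h_q$.

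Second, I would prove a Box Lemma of the following shape: if two persistence functions $p,p'$ are $\varepsilon$-interleaved in the sense above, then for any open rectangle $R=(a,b)\times(c,d)\subseteq\{v>u+2\varepsilon\}$ whose boundary avoids cornerpoints, the total multiplicity of cornerpoints of $p$ contained in $R$ is at most the total multiplicity of cornerpoints of $p'$ contained in the closed $\varepsilon$-thickening of $R$. The proof expands both multiplicities via the alternating-sum formula of \Cref{def:persistence-multiplicity} and chains the interleaving inequalities together with properties (1)-(3) of \Cref{def:persistence-function}, converting each of the four corner evaluations of the $p$-sum into a comparable evaluation of the $p'$-sum at parameters shifted by $\pm\varepsilon$. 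Third, tameness of $f$ and $g$ ensures that $Dgm(p_f)$ and $Dgm(p_g)$ contain only finitely many off-diagonal cornerpoints, so the Box Lemma applied to suitably chosen shrinking boxes around each cornerpoint, together with complementary boxes that meet the diagonal, yields a bipartite incidence structure on which a Hall-type matching argument produces a bijection $\gamma:Dgm(p_f)\to Dgm(p_g)$ (with diagonal points carrying infinite multiplicity) such that $\|c-\gamma(c)\|_\infty\leq\varepsilon$ for every $c$. Taking the infimum over such bijections delivers the announced bound on $d_B$.

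The main obstacle is the Box Lemma and, more specifically, the bookkeeping required to handle degeneracies: cornerpoints lying in the diagonal strip $v\leq u+2\varepsilon$, cornerpoints with infinite death coordinate (essential classes), and the limits $\inf I_u\to-\infty$ and $\sup I_v\to+\infty$ appearing in \Cref{def:persistence-multiplicity}. Tameness is invoked precisely to ensure that these limits stabilize after finitely many steps and that cornerpoints do not accumulate on the diagonal or at infinity, so that the Hall-type argument of the third step terminates.
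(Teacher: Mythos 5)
The paper offers no proof of \Cref{thm:stability-persistent-homology}: it is imported verbatim from Cohen-Steiner, Edelsbrunner and Harer as background material, so there is no argument of the author's to compare yours against. Your sketch reproduces the classical strategy of that reference (sublevel-set interleaving, Box Lemma, matching), and the first two steps are essentially correct: the inclusion $f^{-1}(-\infty,a]\subseteq g^{-1}(-\infty,a+\varepsilon]$ and functoriality of $H_q$ do give the two interleaving inequalities, and the Box Lemma does follow from the alternating-sum formula together with properties (1)--(3) of \Cref{def:persistence-function}.

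The third step, however, glosses over the genuinely hard part. The Box Lemma bounds the multiplicity of $Dgm(p_f)$ in a single rectangle by the multiplicity of $Dgm(p_g)$ in the $\varepsilon$-thickening of that rectangle. Hall's condition requires something stronger: for an arbitrary finite collection $S$ of off-diagonal cornerpoints of $Dgm(p_f)$ lying more than $\varepsilon$ from the diagonal, a lower bound of $|S|$ on the total multiplicity of $Dgm(p_g)$ in the union of the $\varepsilon$-squares centred at the points of $S$. That union is not a rectangle, so the Box Lemma does not apply to it, and applying it square by square over-counts wherever the thickenings overlap. The classical proof sidesteps this by proving an Easy Bijection Lemma that is valid only when $\|f-g\|_\infty$ is small compared with the minimal separation between distinct cornerpoints, and then interpolating along $f_t=(1-t)f+tg$, subdividing $[0,1]$ finely enough that each consecutive pair satisfies the smallness hypothesis and chaining the resulting matchings via the triangle inequality for $d_B$. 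That interpolation step is also where the triangulability of $X$ is actually used (to control tameness of the interpolants), a hypothesis your sketch never invokes. To close the gap you must either supply this interpolation argument or replace the Hall step by a genuinely single-shot matching argument; such arguments exist in the literature, but they require more than the Box Lemma as you have stated it.
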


    We now define the stability within the present categorical persistence setting using works from~\cite{bergomi-rank-based-persistence,bergomi-steady-ranging,amico-natural-pseudo-distance,chazal-proximity-persistence-module,lesnick-theory-interleaved-distance-multidimensional}.
    First, we give the definition of the \textit{interleaving distance} between two filtrations.
    As pointed out in~\cite{bergomi-beyond-topological-persistence}, the interleaving distance is equal to the \textit{natural pseudo distance} of~\cite{amico-natural-pseudo-distance}.
    \begin{definition}[$\epsilon$-interleaved filtrations,~\cite{chazal-proximity-persistence-module,bubenik-categorification-persistent-homology,bubenik-metrics-generalized-persistence,lesnick-theory-interleaved-distance-multidimensional}]\label{def:epsilon-close}
       Two filtrations $F$ and $G$ of $\cat C_m$ are said to be \emph{$\epsilon$-interleaved} if there exist two families of monomorphisms $(\phi_w:F_w\to G_{w+\epsilon})_{w\in\R}$ and $(\psi_w:G_w\to F_{w+\epsilon})_{w\in\R}$ such that for any $u,v,w\in\R$, the following diagrams commute:
       \begin{center}
\adjustbox{scale=0.95}{%
\begin{tikzcd}
F_{w-\epsilon} \arrow[rd, "\phi_{w-\epsilon}" description, shift right] \arrow[rr, "F_{w-\epsilon}^{w+\epsilon}"] &                                                   & F_{w+\epsilon}                                                       & F_{u+\epsilon} \arrow[r, "F_{u+\epsilon}^{v+\epsilon}"]              & F_{v+\epsilon} \\
                                                                                                                              & G_w \arrow[ru, "\psi_w" description]              & G_u \arrow[ru, "\psi_u" description, shift left] \arrow[r, "G_u^v"]  & G_v \arrow[ru, "\psi_v" description]                                 &                \\
                                                                                                                              & F_w \arrow[rd, "\phi_w" description, shift right] & F_u \arrow[rd, "\phi_u" description, shift right] \arrow[r, "F_u^v"] & F_v \arrow[rd, "\phi_v" description, shift left]                     &                \\
G_{w-\epsilon} \arrow[ru, "\psi_{w-\epsilon}" description, shift left] \arrow[rr, "G_{w-\epsilon}^{w+\epsilon}"]              &                                                   & G_{w+\epsilon}                                                       & G_{u+\epsilon} \arrow[r, "G_{u+\epsilon}^{v+\epsilon}", shift right] & G_{v+\epsilon}
\end{tikzcd}
}
       \end{center}
       
        The \emph{interleaving distance} $\delta(F,G)$ between two filtrations $F$ and $G$ of $\cat C_m$ is the infimum of the $\epsilon$ such that $F$ and $G$ are $\epsilon$-interleaved.
        It is equal to $+\infty$ if there is no $\epsilon$ such that $F$ and $G$ are $\epsilon$-interleaved.
    \end{definition}

    \begin{definition}[stable ip-generators]\label{def:stability-ip-generators}
        An ip-generator $\pi$ for $\cat C_m$ is said to be \emph{stable} if for every filtrations $F$ and $G$ of $\cat C_m$:
        $$ d_B(\pi(F), \pi(G)) \leq \delta(F, G) $$
    \end{definition}
    In particular, it was proven in~\cite{chazal-proximity-persistence-module} (Theorem 4.4) that the persistent homology ip-generator $\pi_{h_q}$ is stable.

    \smallskip

    In~\cite{bergomi-steady-ranging}, it has been proven using works from~\cite{amico-natural-pseudo-distance} that the stability of an ip-generator $\pi$ \add{for weighted graphs} is equivalent to $\pi$ being ``balanced'' (see~\Cref{prop:stability-balanced}).
    We present here a generalization of the definition of a balanced ip-generator (see~\Cref{def:balanced-generator}).
    Specifically, the definition of $\epsilon$-interleaved filtrations is used to extend the definition of balanced ip-generators from graphs to arbitrary categories.
    
    \begin{definition}[$\epsilon$-compatible persistence functions]\label{def:epsilon-compatible}
       Two persistence functions $p$ and $p'$ are said to be \emph{$\epsilon$-compatible} if for every $(u\leq v)$ we have $p(u-\epsilon\leq v+\epsilon) \leq p'(u\leq v)$ and $p'(u-\epsilon\leq v+\epsilon) \leq p(u\leq v)$.
    \end{definition}

    \begin{definition}[balanced ip-generator]\label{def:balanced-generator}
        An ip-generator $\pi$ is said to be \emph{balanced} if for every $\epsilon$-interleaved $F$ and $G$, $\pi(F)$ and $\pi(G)$ are $\epsilon$-compatible.
    \end{definition}

    For weighted graphs, this definition coincide with the notion of balancedness introduced in~\cite{bergomi-steady-ranging}.
    In particular, the following lemma shows that the standard property of balanced ip-generators for weighted graph filtrations (as stated in~\cite[Definition 6]{bergomi-steady-ranging}) is equivalent to~\Cref{def:epsilon-close}.
    \begin{lemma}[$\epsilon$-interleaved filtration for graphs (see~{\cite[Definition 6]{bergomi-steady-ranging}})]\label{lem:epsilon-close-charac}
      Let $(X,f)$ and $(X',g)$ be two weighted graphs whose induced filtrations are respectiveley $F$ and $G$, and such that $X$ and $X'$ are isomorphic.
      Then $F$ and $G$ are $\epsilon$-interleaved if and only if there exists an isomorphism $\phi:X\to X'$ such that  $sup_{x\in X}|f(x)-g(\phi(x))|\leq \epsilon$.
    \end{lemma}
    See~\proofref{lem:epsilon-close-charac} in the Appendix.

    
    \begin{proposition}[{\cite[Theorem 1]{bergomi-steady-ranging}}]\label{prop:stability-balanced}
        \add{If $\pi$ is an ip-generator that is balanced for weighted graph filtrations, then it is stable for weighted graph filtrations.}
    \end{proposition}
    \add{
    Unfortunately,~\Cref{prop:stability-balanced} is hard to generalize in a wider context.
    This problem is discussed in~\Cref{sect:stability-balancedness}.
    }

\subsection{Steady and ranging persistence for Graphs}
    Now that persistence is freed from its homology chains, categorical persistence leads to the development of persistence for specific types of data, such as graph persistence.
    We consider the category $\cat{Gph}_m$ of graphs with monomorphisms (note that the theory presented in this section also works with the category of digraphs).

    Different methods of building persistence functions for graphs have been developed.
    In~\cite{bergomi-beyond-topological-persistence}, the authors defined a method for building interesting graph persistence functions using \textit{posets} and the notion of weakly directed properties.
    Another method of building a graph persistence function was introduced in~\cite{bergomi-steady-ranging} through the theory of steady and ranging graph persistence.
    We introduce the main definitions and results of this theory, as it is the starting point of the present work.
    \begin{definition}[graph feature,~{\cite[Definition 8]{bergomi-steady-ranging}}]\label{def:graph-feature}
        \addd{A \emph{graph feature} $\F$ consists, for every graph $G=(V,E)$, of a function $\F_G:\mathcal{P}(V\cup E)\mapsto\{true,false\}$ (where $\mathcal{P}(V\cup E)$ is the power set of $V\cup E$).
        
        Any $X\subseteq V\cup E$ such that $\F_G(X)=true$ is called an \emph{$\F$-set} of $G$.
        If $(G,f)$ is a weighted graph with induced filtration $F$ (see~\Cref{def:weighted-graph}), $X$ is said to be an $\F$-set at level $w \in \R$ if $\F_{F_w}(X)=true$.}
    \end{definition}

    \begin{definition}[graph steady and ranging sets,~{\cite[Definition 10]{bergomi-steady-ranging}}]\label{def:graph-steady-ranging-sets}
        \add{Let $(G,f)$ be a weighted graph.
        $X$ is a \emph{steady $\F$-set} at $(u\leq v)$ if it is an $\F$-set at every level $w\in[u,v]$.\\
        $X$ is a \emph{ranging $\F$-set} at $(u\leq v)$ if it is an $\F$-set at a level $w\leq u$ and at a level $w' \geq v$.
        We respectively denote $S^\F_{(G,f)}(u\leq v)$ and $R^\F_{(G,f)}(u\leq v)$ the set of steady and ranging $\F$-sets at $(u\leq v)$.}
    \end{definition}

    \begin{definition}[graph steady and ranging generators,~{\cite[Definition 11]{bergomi-steady-ranging}}]\label{def:graph-steady-ranging-generators}
        \add{Given a weighted graph $(G,f)$, we define the two following maps:
        $$\sigma^\F_{(G,f)}:(u\leq v) \mapsto |S^\F_{(G,f)}(u\leq v)| \qquad \rho^\F_{(G,f)} : (u\leq v) \mapsto |R^\F_{(G,f)}(u\leq v)|$$
        They naturally induce the following functions, which are respectively called the \emph{steady and ranging generators} of the feature $\F$:
        $$\sigma^\F : (G,f)\mapsto \sigma^\F_{(G,f)} \qquad \rho^\F : (G,f)\mapsto \rho^\F_{(G,f)}$$}
    \end{definition}
    \begin{proposition}[{\cite{bergomi-steady-ranging}}, Proposition 2]\label{prop:graph-steady-ranging-generators}
        For every $(G,f)$, $\sigma^\F_{(G,f)}$ and $\rho^\F_{(G,f)}$ are persistence functions, and $\sigma^\F$ and $\rho^\F$ are ip-generators.
    \end{proposition}
    \smallskip
    
    Consequently, every graph feature induces two different ip-generators, which can be used to study the persistence of this feature.
    As highlighted by~\cite{bergomi-steady-ranging}, not every steady and ranging generator is balanced.
    Indeed, an interesting problem is to understand which features induce a balanced steady or ranging generator.
    In the same paper, the authors defined a class of features called \textit{monotone features}, which induce balanced steady and ranging generators.
    Similarly, in~\cite{bergomi-exploring-graph-persistence}, a larger class of features inducing balanced steady and ranging generators was defined, namely \textit{simple features}.
    These definitions and results are stated below:

    \begin{definition}[monotone graph feature~{\cite[Definition 13]{bergomi-steady-ranging}}]\label{def:graph-monotone-feature}
        A graph feature $\F$ is said to be \emph{monotone} if it satisfies the following two properties:
        \addd{
        \begin{enumerate}
            \item for any (di)graph $G = (V, E) \subseteq G' = (V' , E')$ and any $X \subseteq (V\cup E)$, we have $\F_{G'}(X) = true \implies$ $\F_G(X) = true$.
            \item For any (di)graph $G = (V, E)$ and any $Y\subseteq X \subseteq V \cup E$, we have $\F_G(X) = true \implies \F_G(Y) = true$.
        \end{enumerate}
        }
    \end{definition}

    \begin{proposition}[{\cite{bergomi-steady-ranging}}, Propositions 3 and 4]\label{prop:graph-monotone-feature}
        If $\F$ is a monotone graph feature, then $\sigma^\F = \rho^\F$ is a balanced ip-generator.
    \end{proposition}

    \begin{definition}[simple graph feature~{\cite[Definition 17]{bergomi-exploring-graph-persistence}}]\label{def:graph-simple-feature}
        A graph feature $\F$ is said to be \emph{simple} if it satisfies the following property:
        for any (di)graphs $H_1 \subseteq H_2 \subseteq H_3 \subseteq G$, \addd{$\F_{H_3}(X) = true$ and $\F_{H_2}(X) = false$ implies $\F_{H_1}(X) = false$.}
    \end{definition}

    \begin{proposition}[{\cite{bergomi-exploring-graph-persistence}}, Propositions 20, 24 and 26]\label{prop:graph-simple-feature}
        If $\F$ is a simple graph feature, then $\sigma^\F = \rho^\F$ are balanced ip-generators.
        Moreover, every monotone feature is simple.
    \end{proposition}

    Our main contribution is to extend the steady and ranging persistence framework to other categories and to characterize the features that provide balanced steady and ranging generators.
    A corollary of our main result is that the converse of~\Cref{prop:graph-simple-feature} is also true: if $\sigma^\F$ and $\rho^\F$ are balanced ip-generators then $\F$ is simple.

\section{Extending Steady and Ranging Persistence}\label{sect:extending-steady-ranging}
    In this section we aim to redefine the steady and ranging persistence framework of~\cite{bergomi-steady-ranging} to extend it to a wider class of objects (e.g., hypergraphs, simplicial complexes, or matroids).
    More precisely, we define steady and ranging persistence for a wide variety of concrete categories.

    \subsection{Finitely concrete mono categories}\label{sect:concrete-category}
        In the steady and ranging definitions given in the last section, features were considered as sets of ``subsets of the elements composing a graph'' (i.e., vertices and edges).
        To generalize the concept of feature to other categories, we need to find a definition that grasps the idea of ``a subset of the elements composing $X$'', with $X$ being an object of the studied category.

        In this section, we provide a rigorous definition of this general idea.
        More precisely, we draw from the concept of concrete category to properly define the generalized concept of feature.
        A concrete category is a category equipped with a faithful functor to $\cat{Set}$ (see~\cite{leinster-basic-category-theory} for the definition of a faithful functor).
        We use a similar concept to define a class of categories suited to our context:

        %
        
        \begin{definition}[finitely concrete mono category]\label{def:finitely-concrete-category}
            A \emph{finitely concrete mono category} is a mono category $\cat{C}_m$ equipped with a faithful functor $\Phi:\cat{C}_m\to\cat{FinSet}_m$,
            where $\cat{FinSet}_m$ is the category of finite sets whose morphisms are the injective functions between sets.
            \add{For simplicity, we write $\Phi X$ instead of $\Phi(X)$ whenever no ambiguity arises.}
        \end{definition}
        \add{
        \begin{remark}
            Although monomorphisms are frequently injective in commonly used categories (for instance in hypergraph categories, see~\Cref{sect:examples-constructions-hypergraphs}), this is not a requirement for the category $\cat C_m$.
            Here, it is only required that their image under $\Phi$ is injective.
        \end{remark}
        }
        \begin{definition}[empty-like element]\label{def:empty-like-element}
            Given a finitely concrete mono category $(\cat{C_m}, \Phi)$, an \emph{empty-like element} $\hat\emptyset$ is an initial object of $\cat{C}_m$ such that $\Phi\hat\emptyset = \emptyset$.
            
            See~\cite{leinster-basic-category-theory} for the definition of an initial object.
        \end{definition}

        \begin{definition}[feature]\label{def:feature-concrete}
            Given a finitely concrete mono category $(\cat{C_m}, \Phi)$, a \emph{feature} is a set $\F$ of pairs $(A,X)$, with $X\in\obj(\cat{C_m})$ and $A\subseteq\Phi X$,
            \add{such that if $f : X \to X'$ is an isomorphism, then $(A,X)\in F \iff (\Phi f(A), X') \in \F$.}
        \end{definition}
        \smallskip
        
        Given a monomorphism $\iota:X\injto X'$, we use the following notation: $\iota(A,X) := (\Phi\iota(A), X')$.
        By functoriality, this definition is compatible with the composition of monomorphisms.
        Indeed, given $X\injto{\iota}X'\injto{\iota'}X''$ we have $\iota'\circ \iota(A,X) = \iota'\left( \iota(A, X) \right)$.
        This allows us to track a pair $(A,F_u)$ in a filtration $F$ by looking at $F_u^x(A,F_u) = (\Phi F_u^x(A), F_x)$.

        \paragraph{Example}
        A detailed example of a finitely concrete mono category with an empty-like element is given in~\Cref{sect:examples-constructions-hypergraphs} with the concept of hypergraph.
        We present here another example of such a category using the concept of simplicial complex, and we present two examples of features for this category.

            A finite simplicial complex is a finite set $K$ of simplices of arbitrary dimensions \add{living in a Euclidean space of fixed dimension} such that:
            \begin{itemize}
                \item every face of a simplex in $K$ is also in $K$;
                \item the intersection of two non-disjoint simplices of $K$ is a common face of them.
            \end{itemize}
            Let $\cat{Simp}_m$ be the mono category whose objects are finite simplicial complexes and whose morphisms are inclusions between simplicial complexes.

            The finitely concrete functor $\Phi$ is defined as follows:
            \begin{align*}
                \Phi:\quad \obj(\cat{Simp}_m)\quad &\to\quad \obj(\cat{FinSet}_m)\\
                     K \qquad &\mapsto\qquad K\\
                \Phi:\ \morph(\cat{Simp}_m)\ &\to\ \morph(\cat{FinSet}_m)\\
                    (K\subseteq K')\ &\mapsto\ \Phi(K\subseteq K'):\sigma\in K \mapsto \sigma\in K'
            \end{align*}
            First, $\Phi(K\subset K')$ is injective, so it indeed belongs to $\morph(\cat{FinSet}_m)$.
            Second, $\Phi$ is faithful, because $Hom(K,K')$ is either the empty set or the singleton $\{K\subseteq K'\}$.

            The empty-like object is the empty simplicial complex $K = \hat \emptyset = \emptyset$.
            It is initial because, for every finite simplicial complex $K$, there exists one and only one inclusion $\emptyset \subseteq K$.
            Moreover, $\Phi(\emptyset) = \emptyset$.

            Two examples of features for $\cat{Simp}_m$ are presented here:
            \begin{itemize}
                \item $\F^0$ is the ``isolated-vertex feature'', where $(A,K)$ is in $\F^0$ if and only if $A\subseteq \Phi K$ is a singleton $\{\sigma\}$ of a $0$-simplex (i.e., a vertex) that is not contained in another larger simplex of $K$.
                \item $\F^{\nabla}$ is the ``triangle feature'', where $(A,K)$ is in $\F^{\nabla}$ if and only if $A\subseteq \Phi K=K$ is a set of three $1$-simplices (i.e., segments) that form a triangle in $K$.
            \end{itemize}


        %

    \subsection{Redefining steady and ranging persistence} \label{sect:redefining-steady-ranging}

        In the remaining of this work, $\cat C_m$ is considered to be a category satisfying the following assumptions:
        \begin{enumerate}
            \item $\cat C_m$ is a \emph{finitely concrete mono category} with functor $\Phi$ (see~\Cref{def:finitely-concrete-category}).
            \item $\cat C_m$ has an \emph{empty-like object} $\hat\emptyset$ (see~\Cref{def:empty-like-element}).
        \end{enumerate}
        \smallskip
        
        \begin{definition}[steady and ranging sets]\label{def:steady-ranging-set}
        Let $F$ be a filtration of $\cat C_m$, and let $\F$ be a feature of $\cat C_m$. For every $u\leq v$, the \emph{$\F$-steady} and \emph{$\F$-ranging sets} of $F$ are defined as follows:
        \begin{align*}
        S^\F_F(u\leq v) &= \Bigl\{ (A,F_u)\ /\ \forall w\in[u,v],\ F_u^w(A,F_u)\in\F \Bigr\} \\
        R^\F_F(u\leq v) &= \left\{ (A,F_u)\ /\ \exists x\leq u,\ y\geq v\ \tx{and}\ A'\ /
        \begin{cases}
          (A',F_x)\in\F &\tx{with}\ A=\Phi F_x^u(A')\\
          F_u^{y}(A,F_{u})\in\F
        \end{cases}  \right\}
        \end{align*}
        \end{definition}

        These two definitions imply the following elementary properties:
        \begin{proposition}\label{prop:steady-ranging-basic}
            Given $u_1\leq u_2\leq v_1\leq v_2$, the following properties are true:
            \begin{align*}
                0.\qquad & S^\F_F(u_1\leq v_1) \subseteq R^\F_F(u_1\leq v_1)\\
                1S.\qquad & \addd{(A,X)\in S^\F_F(u_1\leq v_2) \longmapsto {F_{u_1}^{u_2}}(A,X) \in S^\F_F(u_2\leq v_1)\quad\tx{is injective}}\\
                2S.\qquad & S^\F_F(u_2\leq v_2) \bs S^\F_F(u_1\leq v_2) \subseteq S^\F_F(u_2\leq v_1) \bs S^\F_F(u_1\leq v_1)\\
                1R.\qquad & \addd{(A,X)\in R^\F_F(u_1\leq v_2) \longmapsto {F_{u_1}^{u_2}}(A,X) \in R^\F_F(u_2\leq v_1)\quad\tx{is injective}}\\
                2R.\qquad & R^\F_F(u_2\leq v_2) \bs R^\F_F(u_1\leq v_2) \subseteq R^\F_F(u_2\leq v_1) \bs R^\F_F(u_1\leq v_1)
            \end{align*}
        \end{proposition}
        The proof of these properties is the same as that of Lemma~1 and Proposition~2 in~\cite{bergomi-steady-ranging}.
        \begin{definition}[steady and ranging generators]\label{def:steady-ranging-generators}
            Given $\F$ a feature of $\cat C_m$, the $\F$-steady and $\F$-ranging functions for a filtration $F$ are defined as follows:
            \begin{align*}
            \sigma^\F_F: (u\leq v) &\mapsto |S^\F_F(u\leq v)|\\
            \rho^\F_F: (u\leq v) &\mapsto |R^\F_F(u\leq v)|
            \end{align*}
            $\sigma^\F_F$ and $\rho^\F_F$ are persistence functions.
            This follows from~\Cref{prop:steady-ranging-basic}(\emph{1S.}, \emph{1R.}, \emph{2S.}, and \emph{2R.}) \addd{and from the finiteness of $S_{F}^\F(u\leq v)$ and $R_{ F}^\F(u\leq v)$ for all $(u\leq v)$, as they are subsets of the finite set $\{(A, F_u), A\subseteq\Phi F_u\}$.}
            
            As a result, $\sigma^\F: F\mapsto \sigma^\F_F$ and $\rho^\F: F\mapsto \rho^\F_F$ are ip-generators, that is, the \emph{steady} and \emph{ranging generators} of $\F$. 
            \add{The invariance of $\sigma^\F_F$ and $\rho^\F_F$ with respect to natural isomorphism follows from the property that $(A,X)\in F$ if and only if $f(A, X') \in \F$, for any isomorphism $f:X\to X'$ (see~\Cref{def:feature-concrete}).}
        \end{definition}

        All in all, it is possible to build two different ip-generators from each feature.
        Next sections investigate the stability and balancedness of such generators.
  \add{
  \subsection{Stable and balanced steady and ranging generators} \label{sect:stability-balancedness}
    }
    \add{
    \Cref{prop:stability-balanced} states that balanced ip-generators for weighted graph filtrations are stable.
    Unfortunately, this proposition does not easily extend to our framework.
    This section analyses to what extent balancedness is equivalent to stability in the context of generalized steady and ranging persistence.
    We first introduce the notion of \textit{tame filtration}.
    \begin{definition}[tame filtration~{\cite[Definition 4.3]{bubenik-categorification-persistent-homology}}]\label{def:tame-filtration}
        Given a filtration $F$ of $\cat C_m$, we say that $a\in\R$ is a \textit{critical value} of $F$ if for all open interval $I$ containing $a$, there exist $u\leq v \in I$ such that $F_u^v$ is not an isomorphism.
        We call $F$ \textit{tame} if it has a finite number of critical values.
    \end{definition}
    In this setting, Lemma 4.4 of~\cite{bubenik-categorification-persistent-homology} implies that for sufficiently large $u$, all monomorphisms $F_u^v$ are isomorphisms.
    As a result, a tame filtration admits a colimit $F_\infty := F_{a+1}$ where $a$ is the largest critical value of $F$ (see~\cite{leinster-basic-category-theory} for the definition of a colimit).
    This colimit defines, for every $u\in \R$, a unique monomorphism $F_u^\infty$ from $F_u$ to $F_\infty$, such that for every $v\geq u$ we have $F_u^\infty = F_v^\infty \circ F_u^v$.
    It is then possible to define the ``filtering function'' associated to $F$:
    \begin{align*}
        f_F :\  \Phi F_\infty &\to \R \cup \{-\infty,+\infty\}\\
        x\ \ &\mapsto \inf\{u \ /\ x \in \im(\Phi F_u^\infty) \}
    \end{align*}
    }
    \add{
    Intuitively, a tame filtration $F$ can be viewed as a pair $(\Phi F_\infty, f_F)$, similarly to how a weighted graph filtration is described as a pair $(G,f)$.
    Tame filtrations are commonly used in practice for discrete objects such as graphs or simplicial complexes.
    Moreover, they are easier to study because their associated steady and ranging persistence diagrams are finite.
    From here, let $\F$ be a feature of the category $(\cat C_m, \Phi)$.
    \begin{lemma}\label{lem:tame-implies-finite-persistence}
        If $F$ is a tame filtration, \addd{then $Dgm(\sigma^\F_F)$ (resp. $Dgm(\rho^\F_F)$)} is finite i.e. there is a finite number of cornerpoints outside the diagonal $y=x$ and they have finite multiplicity.
    \end{lemma}
    See~\proofref{lem:tame-implies-finite-persistence} in the Appendix.
    }
    \smallskip
    
    \add{
    Following~\cite{bergomi-rank-based-persistence}, we get the following Lemma:
    \begin{lemma}[representation theorem~{\cite[Proposition 10]{bergomi-rank-based-persistence}}]\label{lem:radar}
        Given a tame filtration $F$ and $p = \sigma^\F_F$ (resp. $p = \rho^\F_F$) its associated persistence function, we have:
        $$ p(\bar u \leq \bar v) = \sum_{u<\bar u,\ v>\bar v} \mu(u,v) $$
        where $\mu$ is the multiplicity function associated to $p$ (see~\Cref{def:persistence-multiplicity}).
        \addd{The finiteness of the right side of the formula is ensured by~\Cref{lem:tame-implies-finite-persistence}.}
    \end{lemma}
    
    We restrict here the study of balanced and stable ip-generators to tame filtrations.
    An ip-generator $\pi$ is said to be \textit{tame-stable} if the stability property (\Cref{def:stability-ip-generators}) is satisfied for every tame filtration.
    Similarly, $\pi$ is \textit{tame-balanced} if the balanced property (\Cref{def:balanced-generator}) is satisfied for every tame filtration.
    In particular, stability implies tame-stability and balancedness implies tame-balancedness.
    We first show the following proposition:
    }
    \add{
    \begin{proposition}\label{prop:tame-stable-implies-tame-balanced}
      \addd{If $\sigma^\F$ (resp. $\rho^\F$) is tame-stable}, then it is tame-balanced.
    \end{proposition}
    See~\proofref{prop:tame-stable-implies-tame-balanced} in the Appendix.
    
    \smallskip
    
    However, the converse is less straightforward.
    It has been established in the context of weighted graphs in~\cite{bergomi-steady-ranging} and size functions in~\cite{amico-natural-pseudo-distance}.
    Here, we provide a sufficient condition on the category $(\cat C_m,\Phi)$ that allows to reproduce the arguments used in the proof of Theorem 29 in~\cite{amico-natural-pseudo-distance}, thus ensuring that the converse statement holds:
    \begin{definition}[triangle condition]\label{def:triangle-condition}
        A finitely concrete mono category $(\cat C_m,\Phi)$ satisfies the \textit{triangle condition} if, for any $f,g \in \morph(\cat C_m)$ and $\chi \in \morph(\cat{FinSet}_m)$, the equality $\Phi f = (\Phi g)\circ \chi$ implies the existence of a monomorphism $\phi$ such that $f = g \circ \phi$. This condition can be expressed using the following commutative diagrams:
        \begin{center}
\begin{tikzcd}
\Phi F \arrow[rd, "\Phi f"'] \arrow[rr, "\chi"] &        & \Phi G \arrow[ld, "\Phi g"] & \implies \quad \exists \phi & F \arrow[rd, "f"'] \arrow[rr, "\phi"] &   & G \arrow[ld, "g"] \\
                                                & \Phi H &                             &                             &                                       & H &                  
\end{tikzcd}
        \end{center}
    \end{definition}
    
    \begin{lemma}\label{lem:interleaved-equiv}
      Suppose that $(\cat C_m,\Phi)$ satisfies the triangle condition and let $F$ and $G$ be two tame filtrations.
      Then the two following properties are equivalent:
      \begin{itemize}
          \item $F$ and $G$ are $\epsilon$-interleaved.
          \item $\exists \phi : F_\infty \to G_\infty$ an isomorphism such that $||f_F - f_G \circ \Phi\phi||_\infty \leq \epsilon$.
      \end{itemize} 
    \end{lemma}
    See~\proofref{lem:interleaved-equiv} in the Appendix.
    }
    \smallskip
    
    \add{
    Finally, assuming the triangle condition, we show the converse of~\Cref{prop:tame-stable-implies-tame-balanced}:
    \begin{proposition}\label{prop:tame-balanced-implies-tame-stable-condition}
      Suppose that $(\cat C_m,\Phi)$ satisfies the triangle condition.
      \addd{If $\sigma^\F$ (resp. $\rho^\F$)} is tame-balanced, then it is tame-stable.
    \end{proposition}
    \begin{proof}
      Let $\pi$ be either $\sigma^\F$ or $\rho^\F$.
      This result is proved by adjusting the proof of Theorem 29 in~\cite{amico-natural-pseudo-distance} as follows:
      \begin{itemize}
          \item size pairs such as $(\mathcal M, \phi)$ should be replaced by tame filtrations such as $F$, with associated pair $(\Phi F_\infty, f_F)$.
          \item Reduced size function $l^\ast_{(\mathcal M, \phi)}$ should be replaced by persistence function $\pi(F)$.
          \item Theorem 8 corresponds to~\Cref{lem:radar}.
          \item Proposition 10 is equivalent to the assumption that $\pi$ is tame-balanced.
          This is a direct consequence of~\Cref{lem:interleaved-equiv}.
          Note that ``$f:\mathcal M \to \mathcal N$ is a homeomorphism'' should be replaced by ``$\phi :F_\infty \to G_\infty$ is an isomorphism'' and ``$\max_{P\in\mathcal M}|\phi(P)-\psi(f(P))|\leq h$'' by ``$||f_F - f_G \circ \Phi\phi||_\infty \leq \epsilon$''.
          \item Proposition 11 to 14 are consequences of~\Cref{def:persistence-function} and of the tameness of the considered filtrations (mainly~\Cref{lem:tame-implies-finite-persistence}).
      \end{itemize}
      Theorem 29 states that if $\pi$ is tame-balanced, we have $d_B(\pi(F), \pi(G)) \leq \inf_{\phi} ||f_F - f_G \circ \Phi\phi||_\infty$.
      \Cref{lem:interleaved-equiv} implies $\inf_{\phi} ||f_F - f_G \circ \Phi\phi||_\infty = \delta(F,G)$ so we get:
      \begin{center}
      $d_B(\pi(F), \pi(G)) \leq \delta(F,G)$
      \end{center}
    \end{proof}
  }
  \addd{In~\Cref{sect:hypergraph-categories}, based on the concept of hypergraph, two categories that satisfy and one that does not satisfy the triangle condition are presented.}

\section{Convex and Balanced Features}\label{sect:convex-balanced}
    The stability of persistence diagrams is one of the main advantages of persistent homology.
    Thus, it is natural to wonder whether the stability of persistence diagrams transfers to steady and ranging persistence.
    
    As discussed in the previous section, the stability and balancedness of ip-generators are closely related. 
    The authors of~\cite{bergomi-steady-ranging} showed that some features induce balanced steady and ranging generators, whereas other features induce generators that are not balanced.
    A feature is said to be \emph{steady-balanced} (resp. \emph{ranging-balanced}) if its induced steady (resp. ranging) generator is balanced (see~\Cref{def:steady-ranging-generators,def:balanced-generator}).
    In the aforementioned paper, they defined specific classes of steady-balanced and ranging-balanced features (see~\Cref{prop:graph-simple-feature}).
    
    In this section, we investigate the balancedness of steady and ranging persistence and establish a complete characterization of steady-balanced and ranging-balanced features (see our main result~\Cref{coro:convex-balanced-equivalence}).
    Precisely, we define the notion of convex feature and compare it to the previous notions of monotone and simple features.
    We then show that convex features are equivalent to steady-balanced and ranging-balanced features.

        \begin{definition}[convex feature]\label{def:convex-feature}
            A feature $\F$ is said to be \emph{convex} if
            $$(A,X)\in\F\ \tx{and}\ \iota'\iota(A,X)\in\F \implies \iota(A,X)\in\F$$
            where $\iota$ and $\iota'$ are composable monomorphisms of $\morph(\cat C_m)$, and $\iota'\iota$ is the notation for $\iota'\circ\iota$.
        \end{definition}

        \begin{proposition}\label{prop:convex-feature-charac}
            A feature $\F$ is convex if and only if $S^\F = R^\F$.
        \end{proposition}
        See~\proofref{prop:convex-feature-charac} in the Appendix.

        \subsection{Examples and constructions of convex features}
        In this section, we present different classes of features and expose their relation to convex features.
        In particular, \textit{monotone} and \textit{simple} features (that were previously defined for graphs, see~\Cref{def:graph-monotone-feature,def:graph-simple-feature}) are redefined using the proposed paradigm, to extend their scope to other categories.
        We also introduce two useful classes of features, namely the features that are \textit{right-continued} and \textit{left-continued}.

        \Cref{prop:convex-feature-examples} shows that the features that are monotone or right-continued (resp. left-continued) are convex.
        We then prove that being a simple feature is equivalent to being a convex feature (see~\Cref{prop:convex-feature-examples,prop:simple-convex-equivalence}).

        \begin{definition}[monotone feature (generalization of~\Cref{def:graph-monotone-feature})]\label{def:monotone-feature}
            A feature $\F$ is considered \emph{monotone} if it satisfies the following properties:
            \begin{enumerate}
                \item $\iota(A,X) \in \F \implies (A,X) \in \F$ 
                \item $B\subseteq A\ \tx{and}\ (A,X)\in\F \implies (B,X)\in\F$.
            \end{enumerate}
        \end{definition}
        \begin{definition}[simple feature (generalization of~\Cref{def:graph-simple-feature})]\label{def:simple-feature}
            A feature is considered \emph{simple} if it satisfies the following property:
            
            for every $X_1\injto{\iota}X_2\injto{\iota'}X_3$, we have
            $$\iota\iota'(A,X_1)\in\F \ \tx{and} \ \iota(A,X_1)\notin\F \implies (A,X_1)\notin \F$$
        \end{definition}

        \begin{definition}[continued feature]\label{def:closed-monomorphism}
            A feature $\F$ is considered \emph{right-continued} if $$(A,X)\in\F\implies \iota(A,X)\in\F$$
            Similarly, a feature $\F$ is considered \emph{left-continued} if $$\iota(A,X)\in\F\implies(A,X)\in\F$$
        \end{definition}

        \begin{proposition}\label{prop:convex-feature-examples}
            The following classes of features are convex:
            \add{
            \begin{enumerate}
                \item right-continued;
                \item left-continued;
                \item monotone.
            \end{enumerate}
            }
        \end{proposition}
        \begin{proof}

Using~\Cref{def:convex-feature,def:closed-monomorphism} and the fact that $(\cal{A} \Rightarrow \cal{C}) \Rightarrow ((\cal{A} \wedge \cal{B}) \Rightarrow \cal{C})$:
\begin{enumerate}
    \item right-continued features are convex;
    \item left-continued features are convex because in this case, $\iota'\iota(A,X)\in\F$ implies $\iota(A,X)\in\F$;
    \item monotone features are convex because the first condition of monotone features is to be left-continued (see~\Cref{def:monotone-feature}).
\end{enumerate}

        \end{proof}

        \begin{proposition}\label{prop:simple-convex-equivalence}
            A feature is simple if and only if it is convex.
        \end{proposition}
        See~\proofref{prop:simple-convex-equivalence} in the Appendix.\\

        Finally, we define the \textit{maximal} and \textit{minimal} versions of a feature and provide sufficient conditions for them to be convex:
        \begin{definition}[maximal and minimal version of a feature]\label{def:maximal-minimal-feature}
            The maximal and minimal versions of a feature $\F$ are defined as:
            \begin{align*}
                M\F &:= \left\{ (A,X)\in\F\ /\ \tx{ if }\ A\subseteq B\ \tx{ and }\ (B,X)\in\F \ \tx{ then }\  A=B \right\}\\
                m\F &:= \left\{ (A,X)\in\F\ /\ \tx{ if }\ A\supseteq B\ \tx{ and }\ (B,X)\in\F \ \tx{ then }\  A=B \right\}
            \end{align*}
        \end{definition}

        \begin{proposition}\label{prop:max-min-ric-feature}
            If $\F$ is right-continued, then $M\F$ and $m\F$ are convex.
        \end{proposition}
        See~\proofref{prop:max-min-ric-feature} in the Appendix.

    \subsection{Balanced features are equivalent to convex features}
        The main result is presented in this section: convex features are exactly the features that induce balanced ip-generators (see~\Cref{coro:convex-balanced-equivalence}).

        We first prove that convex features are steady-balanced and ranging-balanced.
        This statement can also be obtained for the category of graphs using~\Cref{prop:simple-convex-equivalence} and because simple features induce balanced $\sigma^\F$ and $\rho^\F$ (see~\cite{bergomi-exploring-graph-persistence}).
        In this study, we provide a general proof.
        \begin{proposition}\label{thm:convex-implies-balanced}
            If $\F$ is a convex feature, then $\sigma^\F = \rho^\F$ is a balanced ip-generator.
        \end{proposition}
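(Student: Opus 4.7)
By Proposition~\ref{prop_convex-feature-charac}, convexity of $\F$ gives $S^\F = R^\F$, so $\sigma^\F = \rho^\F$; it therefore suffices to show that $\sigma^\F$ is balanced. Fix two $\epsilon$-interleaved filtrations $F, G$ with interleaving families $(\phi_w)_{w\in\R}$ and $(\psi_w)_{w\in\R}$. By symmetry between $F$ and $G$, it is enough to construct an injection
$$ \Psi : S^\F_F(u-\epsilon \leq v+\epsilon) \longrightarrow S^\F_G(u\leq v), $$
as this yields $\sigma^\F_F(u-\epsilon\leq v+\epsilon)\leq \sigma^\F_G(u\leq v)$.

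The plan is to send $(A, F_{u-\epsilon}) \mapsto \phi_{u-\epsilon}(A, F_{u-\epsilon}) =: (B, G_u)$. Injectivity is immediate, since $\Phi\phi_{u-\epsilon}$ is injective by the definition of a finitely concrete mono category. The core of the argument is verifying that $(B, G_u) \in S^\F_G(u\leq v)$, i.e.\ that $G_u^w(B, G_u)\in\F$ for every $w\in[u,v]$. I would use the interleaving commuting squares to rewrite
$$ G_u^w \circ \phi_{u-\epsilon} = \phi_{w-\epsilon} \circ F_{u-\epsilon}^{w-\epsilon}, $$
and set $C := F_{u-\epsilon}^{w-\epsilon}(A, F_{u-\epsilon})$, so that $G_u^w(B, G_u) = \phi_{w-\epsilon}(C)$. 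Because $w\in[u,v]$ implies both $w-\epsilon$ and $w+\epsilon$ lie in $[u-\epsilon, v+\epsilon]$, the steadiness assumption on $(A, F_{u-\epsilon})$ yields $(C, F_{w-\epsilon})\in\F$ and
$$ \psi_w \circ \phi_{w-\epsilon}(C) \;=\; F_{w-\epsilon}^{w+\epsilon}(C) \;=\; F_{u-\epsilon}^{w+\epsilon}(A, F_{u-\epsilon}) \;\in\; \F. $$
Applying convexity (Definition~\ref{def:convex-feature}) with $\iota = \phi_{w-\epsilon}$ and $\iota' = \psi_w$ to the pair $(C, F_{w-\epsilon})$ then gives $\phi_{w-\epsilon}(C)\in\F$, which is exactly what we need.

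The step I expect to be the most delicate is isolating the right instance of convexity: the witness we want in $\F$ is $\phi_{w-\epsilon}(C)$, which lives in $G$, but the only way to use the steadiness of $(A, F_{u-\epsilon})$ is to pull back through $\psi_w$ and stay in $F$; convexity is precisely the tool that lets us combine the ``source'' membership $(C,F_{w-\epsilon})\in\F$ and the ``composition'' membership $\psi_w\phi_{w-\epsilon}(C)\in\F$ to extract the ``intermediate'' membership $\phi_{w-\epsilon}(C)\in\F$. Once this verification is done for every $w\in[u,v]$, the map $\Psi$ is well-defined into $S^\F_G(u\leq v)$, injective by functoriality of $\Phi$, and the opposite inequality follows by swapping the roles of $F$ and $G$ (using $\psi$ in place of $\phi$), which concludes the proof of balancedness.
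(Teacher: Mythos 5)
Your proposal is correct and follows essentially the same route as the paper's proof: the injection $S^\F_F(u-\epsilon\leq v+\epsilon)\to S^\F_G(u\leq v)$ induced by $\phi_{u-\epsilon}$, the commuting-square identity $G_u^w\circ\phi_{u-\epsilon}=\phi_{w-\epsilon}\circ F_{u-\epsilon}^{w-\epsilon}$, and the application of convexity to the composable pair $\iota=\phi_{w-\epsilon}$, $\iota'=\psi_w$ are exactly the paper's argument. The only cosmetic addition is your explicit invocation of Proposition~\ref{prop_convex-feature-charac} for $\sigma^\F=\rho^\F$, which the paper handles the same way elsewhere.
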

        See~\proofref{thm:convex-implies-balanced} in the Appendix.\\

        It turns out that the features that induce balanced steady-generator or balanced ranging-generator are exactly the convex features.
        First, we prove that a feature that is not convex induces a steady-generator that is not balanced, i.e., that steady-balanced features are convex.
        \begin{proposition}\label{thm:steady-balanced-implies-convex}
            If a feature $\F$ is not convex, then it is not steady-balanced.
        \end{proposition}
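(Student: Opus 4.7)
The plan is to construct, for an arbitrary $\eta>0$, two tame filtrations $F$ and $G$ that are $\eta$-interleaved but whose steady persistence functions fail to be $\eta$-compatible, which by~\Cref{def:balanced-generator} is enough to conclude that $\sigma^\F$ is not balanced. Since $\F$ is not convex, \Cref{def:convex-feature} fails: there exist composable monomorphisms $X\injto{\iota}X'\injto{\iota'}X''$ and a subset $A\subseteq\Phi X$ such that $(A,X)\in\F$ and $\iota'\iota(A,X)\in\F$ while $\iota(A,X)\notin\F$. These three objects and the witness $A$ will serve as the building blocks of both filtrations.

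Concretely, I would take $F$ to be the filtration $\hat\emptyset\to X\to X''$ with transitions at times $0$ and $1$, using the composite $\iota'\iota$ to realize the direct jump from $X$ to $X''$, and $G$ to be the filtration $\hat\emptyset\to X\to X'\to X''$ with transitions at $0$, $1$, and $1+\eta$, so that $G_u=X'$ only on the short interval $[1,1+\eta)$. Both are tame filtrations. The next step is to exhibit an explicit $\eta$-interleaving $(\phi_w,\psi_w)_{w\in\R}$ by defining each map piecewise in terms of identities, the unique morphism out of $\hat\emptyset$, and the monomorphisms $\iota$, $\iota'$, and $\iota'\iota$; commutativity of each interleaving square will reduce to the functorial identity $\iota'\circ\iota=\iota'\iota$ and to the uniqueness of morphisms out of the initial object.

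To violate $\eta$-compatibility, I would evaluate both persistence functions at $(u,v)=(1/2,2)$ with $\eta$ chosen small enough that $u-\eta>0$ and $v+\eta>1+\eta$. Unpacking~\Cref{def:steady-ranging-set}, $\sigma^\F_F(u-\eta\leq v+\eta)$ counts the subsets $A'\subseteq\Phi X$ satisfying $(A',X)\in\F$ and $\iota'\iota(A',X)\in\F$, whereas $\sigma^\F_G(u\leq v)$ counts those $A'$ satisfying the same two conditions \emph{plus} the additional requirement $\iota(A',X)\in\F$. The second set is contained in the first, and the non-convex witness $A$ lies in the first but not the second, so the first count is strictly larger, directly contradicting the inequality required by~\Cref{def:epsilon-compatible}. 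The main obstacle is the case analysis needed to define the interleaving maps at the three transition times $0$, $1$, and $1+\eta$ and to verify commutativity of the resulting squares, where the filtration morphisms piece together differently on the $F$ and $G$ sides; once that is settled, the strict inequality between the two counts is an immediate set-theoretic observation.
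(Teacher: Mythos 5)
Your proposal is correct and follows essentially the same strategy as the paper's proof: take the non-convexity witness $X\injto{\iota}X'\injto{\iota'}X''$ with $A$, build two interleaved filtrations where one passes through $X'$ on a short interval and the other jumps directly from $X$ to $X''$ via $\iota'\iota$, and observe that the steady set of the former picks up the extra condition $\iota(A',X)\in\F$, making it strictly smaller and violating $\epsilon$-compatibility. The only differences are cosmetic (the roles of $F$ and $G$ are swapped, you use an arbitrary $\eta$ where the paper fixes $\epsilon=1$, and your inclusion of $\hat\emptyset$ is unnecessary for the steady case though harmless).
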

        See~\proofref{thm:steady-balanced-implies-convex} in the Appendix.
        
        \smallskip

        With a similar proof (yet a bit more technical), we show that ranging-balanced features are convex.
        \begin{proposition}\label{thm:ranging-balanced-implies-convex}
            If a feature $\F$ is not convex, then it is not ranging-balanced.
        \end{proposition}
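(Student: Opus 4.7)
The plan is to adapt the construction used in Proposition~\ref{thm:steady-balanced-implies-convex}, but to exploit ranging's sensitivity to the existence of an $\F$-ancestor rather than to intermediate feature states. Fix a non-convex witness: $(A,X)\in\F$ together with composable monomorphisms $\iota:X\injto Y$ and $\iota':Y\injto Z$ such that $\iota'\iota(A,X)\in\F$ but $\iota(A,X)\notin\F$. Because features are isomorphism-invariant (\Cref{def:feature-concrete}), $\iota$ is not an isomorphism, so $|\Phi X|<|\Phi Y|$ and no monomorphism $Y\to X$ exists in $\cat C_m$; similarly no monomorphism $Z\to Y$ exists.

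Next I would build two filtrations that are $\delta$-interleaved but differ in whether the pair sitting at a $Y$-level has an accessible $\F$-ancestor. Choose $a<b$ and $0<\delta<(b-a)/2$, and set:
\begin{itemize}
  \item $F_u=\hat\emptyset$ for $u<a$, $\ F_u=Y$ for $u\in[a,b)$, $\ F_u=Z$ for $u\geq b$ (this filtration skips the $X$-phase);
  \item $G_u=\hat\emptyset$ for $u<a$, $\ G_u=X$ for $u\in[a,a+\delta)$, $\ G_u=Y$ for $u\in[a+\delta,b)$, $\ G_u=Z$ for $u\geq b$.
\end{itemize}
Having both filtrations reach $Z$ at the same time $b$ is crucial: the nonexistence of $Z\to Y$ would otherwise obstruct the interleaving. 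I would define the maps $\phi_w,\psi_w$ using $\id,\iota,\iota',\iota'\iota$ and verify the commuting diagrams of \Cref{def:epsilon-close}. The only binding constraint is that at $w\in[a,a+\delta)$ one needs $\phi_w:Y\to G_{w+\epsilon}$ with $G_{w+\epsilon}\neq X$, which forces $\epsilon\geq\delta$; $\epsilon=\delta$ then suffices, so $F$ and $G$ are $\delta$-interleaved.

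To exhibit the failure of $\delta$-compatibility, I would evaluate both functions at $u=v=\tfrac{a+b}{2}$, so that $F_u=G_u=Y$. The pair $(\iota(A),Y)$ is not ranging in $F$ at $(u,v)$: every $x\leq u$ satisfies $F_x\in\{\hat\emptyset,Y\}$, and since $(\iota(A),Y)\notin\F$ and $\iota(A)\neq\emptyset$, no $(A',F_x)\in\F$ pushes forward to $(\iota(A),Y)$. The same pair is, however, ranging in $G$ at the enlarged interval $(u-\delta,v+\delta)$: its $\F$-ancestor is $(A,X)\in\F$ at any $x\in[a,a+\delta)$, with $\Phi G_x^{u-\delta}(A)=\iota(A)$, and its $\F$-descendant is $(\iota'\iota(A),Z)\in\F$ at any $y\geq b$. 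A parallel argument shows that any pair $(B,Y)$ that is ranging in $F$ at $(u,v)$ is also ranging in $G$ at $(u-\delta,v+\delta)$, since $G$'s admissible pasts strictly include those of $F$ and the future condition coincides in both filtrations through the shared $Z$-phase. Hence $\rho^\F_G(u-\delta,v+\delta)\geq\rho^\F_F(u,v)+1$, contradicting $\delta$-compatibility and proving $\rho^\F$ is not balanced.

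The main obstacle is that the steady-case construction does not transfer directly: ranging persistence is robust to intermediate gaps in feature-presence, so making the filtration briefly pass through a bad $Y$-phase does not affect ranging counts. The remedy used above is to displace the asymmetry to the ancestry layer, removing the entire $X$-phase from one filtration. The subtle technical point is the ``extra $\geq 1$'' argument, which must hold for an arbitrary non-convex $\F$: it relies on the observation that $G$'s past-options always include all of $F$'s together with the additional route $B=\iota(A')$ sourced from $(A',X)\in\F$, and the non-convex witness supplies a concrete element $A'=A$ realising a strict extra.
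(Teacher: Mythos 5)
Your construction is essentially the paper's: one filtration passes through the phase $X$ while the other skips it, the two are $\delta$-interleaved, and the separating ranging pair is $\iota(A,X)$, whose only admissible ancestor $(A,X)$ lives in the skipped phase — the paper merely swaps the names of $F$ and $G$ and offsets the arrival times at the terminal object by $\epsilon$ instead of synchronizing them. The argument is correct at the same level of rigor as the paper's, with two small caveats: you need $\delta\leq(b-a)/4$ (not just $\delta<(b-a)/2$) so that $G_{u-\delta}=Y$ at your chosen evaluation point, and your assertion that $\iota(A)\neq\emptyset$ is unjustified (if $A=\emptyset$ and $(\emptyset,\hat\emptyset)\in\F$, the $\hat\emptyset$-ancestor route makes $(\emptyset,Y)$ ranging in $F$ as well), although the paper's own proof silently drops the same $\hat\emptyset$-ancestor route from its characterizations of the ranging sets.
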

        See~\proofref{thm:ranging-balanced-implies-convex} in the Appendix.
        
        \smallskip
        
        All in all, we obtain our main result, which is the following characterization of convex and balanced features:
        \begin{theorem}\label{coro:convex-balanced-equivalence}
            Given a feature $\F$ of the category $\cat C_m$, the following properties are equivalent:
            \begin{enumerate}
                \item $\F$ is convex;
                \item $\sigma^\F=\rho^\F$;
                \item $\sigma^\F$ is balanced;
                \item $\rho^\F$ is balanced.
            \end{enumerate}
        \end{theorem}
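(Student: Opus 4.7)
The plan is to derive~\Cref{coro:convex-balanced-equivalence} as a direct consolidation of the four preceding propositions, arranged as a star of implications around condition~1 (convexity of $\F$). No new combinatorics is needed; the work is entirely bookkeeping.

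For $1 \Leftrightarrow 2$, I would appeal to~\Cref{prop_convex-feature-charac}, which asserts that convexity of $\F$ is equivalent to $S^\F = R^\F$ (understood as equality of sets for every filtration $F$ and every pair $u\leq v$). The forward direction $1 \Rightarrow 2$ is immediate upon taking cardinalities on both sides. For $2 \Rightarrow 1$, I would combine the containment $S^\F_F(u\leq v) \subseteq R^\F_F(u\leq v)$ from property~$0$ of~\Cref{prop_steady-ranging-basic} with the fact that $\Phi F_u$ is finite (automatic because $\Phi$ takes values in $\cat{FinSet}_m$): the elements of $S^\F_F(u\leq v)$ and $R^\F_F(u\leq v)$ are pairs $(A,F_u)$ with $A\subseteq \Phi F_u$, so both sets are finite, and equality of cardinalities upgrades to equality of sets. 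Then~\Cref{prop_convex-feature-charac} yields convexity.

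The implications $1 \Rightarrow 3$ and $1 \Rightarrow 4$ are both contained in~\Cref{thm:convex-implies-balanced}, which states that a convex feature satisfies $\sigma^\F = \rho^\F$ and that this common ip-generator is balanced. The reverse implications $3 \Rightarrow 1$ and $4 \Rightarrow 1$ are the contrapositives of~\Cref{thm:steady-balanced-implies-convex} and~\Cref{thm:ranging-balanced-implies-convex}, respectively, which together assert that a non-convex feature induces neither a balanced steady generator nor a balanced ranging generator.

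The only step requiring any subtlety is $2 \Rightarrow 1$, where the finiteness of $\Phi F_u$ is used crucially to turn equality of cardinalities into equality of sets; this is precisely where the hypothesis that $(\cat C_m, \Phi)$ is a \emph{finitely} concrete mono category (rather than merely concrete) enters. Apart from this, I expect no serious obstacle, since all substantive arguments have already been discharged in the preceding propositions.
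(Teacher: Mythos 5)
Your proposal is correct and matches the paper's proof, which likewise assembles the theorem directly from \Cref{prop_convex-feature-charac,thm:convex-implies-balanced,thm:steady-balanced-implies-convex,thm:ranging-balanced-implies-convex}. Your explicit handling of $2 \Rightarrow 1$ (upgrading equality of cardinalities to equality of sets via the inclusion $S^\F_F(u\leq v)\subseteq R^\F_F(u\leq v)$ and finiteness of $\Phi F_u$) is a detail the paper leaves implicit, and it is the right way to bridge $\sigma^\F=\rho^\F$ back to $S^\F=R^\F$.
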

        \begin{proof}
            By~\Cref{prop:convex-feature-charac,thm:convex-implies-balanced,thm:steady-balanced-implies-convex,thm:ranging-balanced-implies-convex}.
        \end{proof}
        \add{
        \begin{remark}
            Note that being balanced implies being tame-balanced.
            Moreover, the filtrations used in the proofs of~\Cref{thm:steady-balanced-implies-convex,thm:ranging-balanced-implies-convex} are tame.
            Therefore, if a feature is not convex, then its steady and ranging generators are not tame-balanced.
            As a result,~\Cref{coro:convex-balanced-equivalence} remains valid when replacing ``balanced'' with ``tame-balanced''.
            In particular, $\sigma^\F$ (or $\rho^\F$) is balanced if and only if it is tame-balanced.
        \end{remark}
        }
        \add{An advantage of this characterization is that the convex property is defined on the feature itself and is simpler to handle than the balanced property, which requires to consider $\epsilon$-interleaved filtrations and their persistence functions.}
        In addition,~\Cref{coro:convex-balanced-equivalence} implies that the stability of steady and ranging persistence is entangled: $\sigma^\F$ is balanced if and only if $\rho^\F$ is balanced (i.e., being steady-balanced is equivalent to being ranging-balanced).
        
        \smallskip
        
        \begin{remark}
            Let us remind the assumptions made for~\Cref{coro:convex-balanced-equivalence} to be true:
            \begin{enumerate}
                \item $(\cat C_m, \Phi)$ is a \emph{finitely concrete mono category}.
                \item $(\cat C_m, \Phi)$ has an \emph{empty-like object} $\hat\emptyset$.
            \end{enumerate}
        \end{remark}

\section{Steady and Ranging persistence for Hypergraphs}\label{sect:examples-constructions-hypergraphs}
    This section provides specific examples of steady and ranging persistence with the new formalism introduced in~\Cref{sect:extending-steady-ranging}.
    
    In~\cite{liu-persistent-spectral-hypergraph-learning}, the authors study the persistence of hypergraph attributes in filtrations derived from proteins.
    Inspired by this work, we present results and examples obtained using steady and ranging persistence for hypergraphs.
    We introduce different hypergraph features to give some insight about~\Cref{coro:convex-balanced-equivalence} concerning the convexity and stability of features.

    \subsection{Hypergraph categories}\label{sect:hypergraph-categories}
    Hypergraphs are generalization of graphs, where edges can connect more than two nodes.
    In this section, we propose a formalism for hypergraphs to fit our categorical framework.
    The definitions are similar to those used in~\cite{ouvrard-hypergraphs,dorfler-category-hypergraphs}.
    
    \begin{definition}[hypergraph]\label{def:hypergraph}
        A \emph{hypergraph} $H=(V,E,h)$ consists of a finite set of vertices $V$, a finite set of \textit{hyperedges} $E$, and an \textit{incidence function} $h:E\to\mathcal{P}(V)\bs\emptyset$ that associates each hyperedge with its corresponding set of vertices.
        \addd{In particular, two hyperedges of a hypergraph can have the same underlying set of vertices.}
    \end{definition}
    \begin{definition}[hypergraph morphism]\label{def:hypergraph-morphism}
        A \emph{hypergraph morphism} from $H=(V,E,h)$ to $H'=(V',E',h')$ is a function  $f: V\sqcup E \to V'\sqcup E'$ such that
        \begin{enumerate}
            \item $f(V)\subseteq V'$
            \item $f(E)\subseteq E'$
            \item for all $(v,e)\in V\times E$, we have $v\in h(e) \implies f(v) \in h'(f(e))$.
        \end{enumerate}
    \end{definition}    
    From now on, we will abuse the notation and mix a hyperedge with its underlying set.
    For instance, if $v$ is a vertex and $e$ is a hyperedge, we will write $v \in e$, $|e|$, and $e \cap e'$ instead of $v\in h(e)$, $|h(e)|$, and $h(e) \cap h(e')$, respectively.
    \smallskip
    
    Hypergraphs enter the scope of the aforementioned generalization of steady and ranging persistence.
    Indeed, it is possible to define a hypergraph category that satisfies the three assumptions of~\Cref{sect:redefining-steady-ranging}.
    \begin{definition}[hypergraph category]\label{def:hypergraph-category}
        $\cat{Hgph}$ is the category whose objects are hypergraphs and whose morphisms are hypergraph morphisms.
        $\cat{Hgph}_m$ is the category $\cat{Hgph}$ restricted to monomorphisms.
    \end{definition}

    \begin{proposition}[]\label{prop:hypergraph-category-satisfy-assumptions}
        $\cat{Hgph}_m$ is a finitely concrete mono category with an empty-like element, i.e.\ it satisfies the assumptions of~\Cref{sect:redefining-steady-ranging}.
    \end{proposition}
    \begin{proof}
We define the following functor:
\begin{align*}
    \Phi :\quad \obj(\cat{Hgph}_m)\quad &\to\quad \obj(\cat{FinSet}_m)\\
         (V,E,h)\qquad &\mapsto\qquad V \sqcup E\\
    \Phi :\ \morph(\cat{Hgph}_m)\ &\to\ \morph(\cat{FinSet}_m)\\
        \big( (V,E,h)\injto{\iota}(V',E',h') \big) &\mapsto\ \Phi\iota\ :=\ \iota : V\sqcup E \to V'\sqcup E'
\end{align*}

First, $\Phi$ is a functor from $\cat{Hgph}_m$ to $\cat{FinSet}_m$.
Indeed, every monomorphism $\iota$ of $\cat{Hgph}$ is injective (a proof is provided in the appendix, see~{\hyperlink{mono-implies-injective}{Proof of injectivity of monomorphisms in $\cat{Hgph}$}}) therefore $\Phi\iota = \iota$ is also injective.

Second, we prove that $\Phi$ is faithful:
let $H\injto{\iota}H'$ and $H\injto{\iota'}H'$ be two monomorphisms such that $\Phi\iota = \Phi\iota'$. 
We have $\iota = \Phi\iota = \Phi\iota'(x) = \iota'$.
Hence, $\Phi$ is faithful.

Third, let $H_\emptyset = (\emptyset, \emptyset, \vec\emptyset)$ be the empty hypergraph, with no node.
For every hypergraph $H$ there exists one only one monomorphism from $H_\emptyset$ to $H$ (which consists of the empty function $\emptyset \to V\sqcup E$).
Moreover, $\Phi H_\emptyset =\emptyset$ so $H_\emptyset$ is an empty-like object.

    \end{proof}

    It is possible to restrain the category $\cat{Hgph}_m$ to allow only certain monomorphisms between consecutive hypergraphs.
    This can be useful because the behavior of features depends on the underlying category, and some practical hypergraph filtrations can fall into those restricted categories.

    Thus, two alternative hypergraph categories are defined:
    \begin{definition}[hypergraph alternative categories]\label{def:hypergraph-alternative-categories}~

        \begin{enumerate}
            \item $\cat{Hgph}^\leq_m$ is the category $\cat{Hgph}_m$ restricted to monomorphisms $f$ such that\\
            $f(u)\in f(e)\implies u\in e$ (equivalently $u\notin e\implies f(u)\notin f(e)$).
            \item $\cat{Hgph}^=_m$ is the category $\cat{Hgph}_m$ restricted to monomorphisms $f$ such that\\
            $|e| = |f(e)|$ (preserving the size of hyperedges).
        \end{enumerate}
        An illustration of the different types of hypergraph monomorphisms is shown in~\Cref{fig:hypergraph-filtration-types}.
    \end{definition}
    
    \smallskip
    
    \Cref{def:hypergraph-alternative-categories} implies that  $\morph(\cat{Hgph}^=_m)\subseteq\morph(\cat{Hgph}^\leq_m)\subseteq\morph(\cat{Hgph}_m)$.
    \addd{Indeed, let us prove the first inclusion: suppose that $|e| = |f(e)|$ and let $f(u)\in f(e)$, we show that $u\in e$.
    Hypergraph monomorphisms are injective for vertices and edges (see~{\hyperlink{mono-implies-injective}{Proof of injectivity of monomorphisms in $\cat{Hgph}$}}), so $|\{f(v), v\in e\}|=|e|$, which implies $|\{f(v), v\in e\}|=|f(e)|$.
    As $\{f(v), v\in e\}\subseteq f(e)$ (by condition 3 of~\Cref{def:hypergraph-morphism}), we obtain $\{f(v), v\in e\} = f(e)$.
    Hence, there exists $v\in e$ such that $f(u) = f(v)$, which gives $u=v\in e$ by injectivity of $f$.}
    
    \smallskip
    %
    These inclusions imply that the convex features of $\cat{Hgph}^\leq_m$ are convex in $\cat{Hgph}^=_m$ and the convex features of $\cat{Hgph}_m$ are convex in $\cat{Hgph}^\leq_m$.
    However, the converse is not necessarily true, as shown by the examples in~\Cref{sect:hypergraphs-features}.
    
    \begin{figure}
          \centering
        \includegraphics[width = 0.43\textwidth]{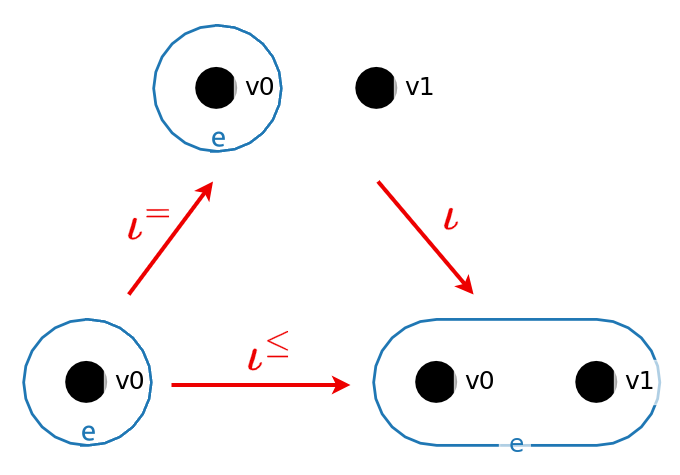}
        \includegraphics[width = 0.27\textwidth]{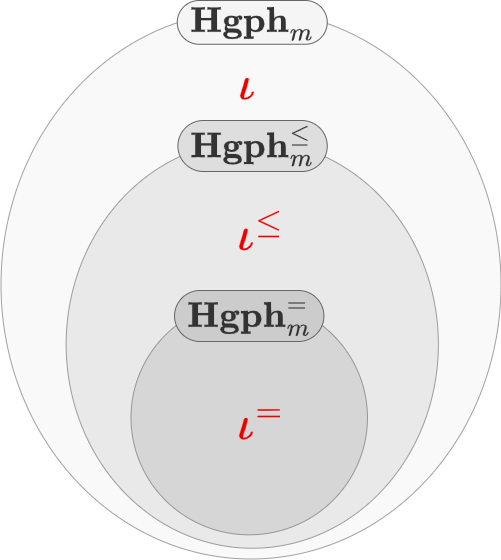}
          \caption{On the left: three hypergraphs (nodes in black and hyperedges in blue) and three monomorphisms (in red) of the hypergraph category.
          $\iota$ is a morphism of $\cat{Hgph}_m$ but not of $\cat{Hgph}^\leq_m$ and $\cat{Hgph}^=_m$.
          $\iota^\leq$ is a morphism of $\cat{Hgph}_m$ and $\cat{Hgph}^\leq_m$ but not of $\cat{Hgph}^=_m$.
          $\iota^=$ is a morphism of $\cat{Hgph}_m$, $\cat{Hgph}^\leq_m$ and $\cat{Hgph}^=_m$.}
          \label{fig:hypergraph-filtration-types}
    \end{figure}
    \add{
      Here, we present how the results from~\Cref{sect:stability-balancedness} apply to hypergraph categories:
      \begin{proposition}\label{prop:triangle-condition-hypergraph}
      $(\cat{Hgph}^\leq_m,\Phi)$ and $(\cat{Hgph}^=_m,\Phi)$ satisfy the triangle condition (see~\Cref{def:triangle-condition}).
      As a result, by~\Cref{prop:tame-stable-implies-tame-balanced,prop:tame-balanced-implies-tame-stable-condition,coro:convex-balanced-equivalence}, the following properties are equivalent:
      \begin{itemize}
          \item $\F$ is a convex feature of $\cat{Hgph}^\leq_m$ (or $\cat{Hgph}^=_m$).
          \item $\sigma^\F = \rho^\F$ is tame-balanced in $\cat{Hgph}^\leq_m$ (or $\cat{Hgph}^=_m$).
          \item $\sigma^\F = \rho^\F$ is tame-stable in $\cat{Hgph}^\leq_m$ (or $\cat{Hgph}^=_m$).
      \end{itemize}
      \end{proposition}
      See~\proofref{prop:triangle-condition-hypergraph} in the Appendix.
    }
    
    \addd{However, $(\cat{Hgph}_m,\Phi)$ does not satisfy the triangle condition, see~\Cref{fig:hgph-triangle}.}
    \begin{figure}[!htb]
        \centering
        \includegraphics[width = 0.58\textwidth]{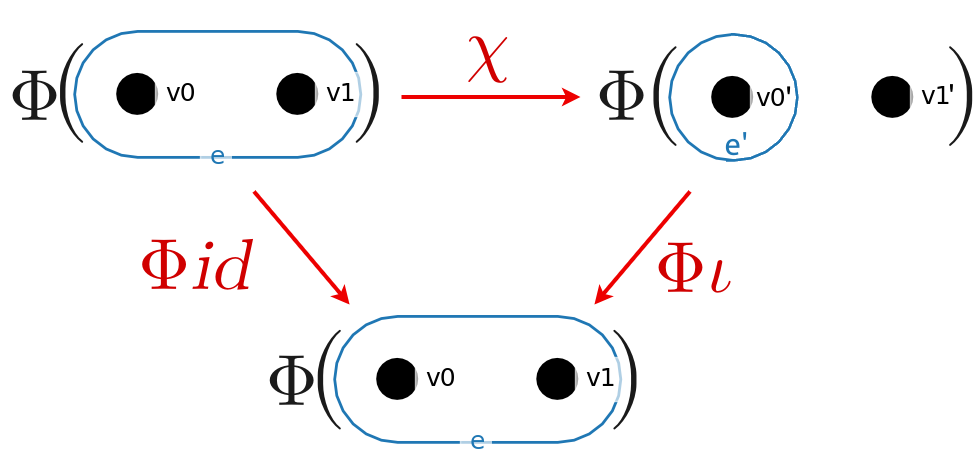}
          \caption{\addd{Images of three hypergraphs under the functor $\Phi$, together with the monomorphisms $\Phi\id$, $\Phi\iota$, and $\chi$ between the corresponding finite sets.
          The morphism $\iota$ is a hypergraph monomorphism in $\cat{Hgph}_m$. Both $\iota$ and its image $\Phi\iota\in \morph(\cat{FinSet}_m)$ send $e'$, $v_0'$, and $v_1'$ to $e$, $v_0$, and $v_1$, respectively.
          Conversely, $\chi\in \morph(\cat{FinSet}_m)$ sends $e$, $v_0$ and $v_1$ to $e'$, $v_0'$ and $v_1'$.
          The diagram commutes, but $\chi$ cannot be obtained as the image under $\Phi$ of any hypergraph monomorphism (this would contradict condition 3 of~\Cref{def:hypergraph-morphism}).
          Thus, $(\cat{Hgph}_m,\Phi)$ does not satisfy the triangle condition.}}
          \label{fig:hgph-triangle}
    \end{figure}
    
  \subsection{Hypergraph features}\label{sect:hypergraphs-features}
    In this section, we present three different hypergraph features: the \textit{hub}, the \textit{exclusivity}, 
    and the \textit{max-originality} features.
    We discuss their convexity using proofs or counterexamples.
    \add{Note that the invariance of the proposed features under hypergraph isomorphism (see~\Cref{def:feature-concrete}) is not explicitly proven here, as it is inherent by construction. Indeed, the definitions of these features depend exclusively on the underlying hypergraph structure, which is preserved by isomorphism.
    }
    
    To shorten notation, we will omit the incidence function and write $H=(V,E)$ instead of $H=(V,E,h)$.
    Given a hyperedge $e$ of hypergraph $H$, $N(e)$ denotes the set of other hyperedges that share at least one vertex with $e$.
    $N(e)$ is called the set of neighbors of $e$.
    \begin{enumerate}
        \item \emph{The hub feature}. \add{Inspired by} the study of persistent hubs of graphs in~\cite{bergomi-steady-ranging},
            a hyperedge $e$ is said to be a \textit{hub} in hypergraph $H$ if it has more neighbors than its neighbors:
            $$e\ \tx{is a hub in $H$}\ \equi\ N(e)\neq\emptyset\ \tx{and}\ |N(e)|>max_{e'\in N(e)}|N(e')|$$
            The hub feature for hypergraphs is defined as follows:
            $$ \F^h = \left\{ (A,H), \tx{with}\ H=(V,E)\ \tx{such that}\ A=\{e\}\subseteq E\ \tx{and}\ e\ \tx{is a hub in $H$} \right\}$$

            In~\cite{bergomi-steady-ranging}, the hub feature for vertices in graphs was proved to induce unbalanced steady and ranging generators.
            Similarly, the present hub feature (for hyperedges in hypergraphs) is not convex in $\cat{Hgph}^=_m$ (see~\Cref{fig:hyper-filtration-2}); thus, it induces unbalanced generators.
            \begin{figure}[!htb]
                \centering
                \includegraphics[width = 0.65\textwidth]{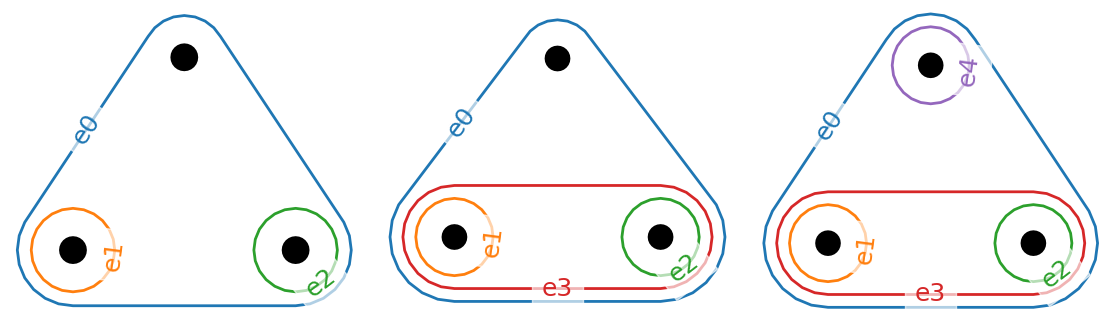}
                  \caption{A hypergraph filtration $H_0 \injto H_1 \injto H_2$ in $\cat{Hgph}^=_m$.
                  $e_0$ is a hub in $H_0$ and $H_2$ but not in $H_1$ so the feature $\F^h$ is not convex in $\cat{Hgph}^=_m$.
                  Hence, it is not convex in $\cat{Hgph}^\leq_m$ and $\cat{Hgph}_m$}
                  \label{fig:hyper-filtration-2}
            \end{figure}

        \item \emph{The exclusivity feature}.
            A hyperedge $e$ is said to \textit{have an exclusivity} in hypergraph $H$ if and only if it possesses a node that does not belong to any other hyperedge:
            $$e\ \tx{has an exclusivity in $(V,E)$}\ \equi\ \exists v\in e\ /\ \forall e'\in E\bs\{e\},\ v\notin e'$$
            The exclusivity feature is defined as follows:
            $$ \F^x = \left\{ (A,H), \tx{with}\ H=(V,E)\ \tx{such that}\ A=\{e\}\subseteq E \ \tx{and}\ e\ \tx{has an exclusivity} \right\}$$

            \begin{proposition}\label{prop:convex-exclusivity}
                $\F^x$ is convex in $\cat{Hgph}^=_m$.
            \end{proposition}
            \begin{proof}
We show that $\F^x$ is left-continued (and hence convex, see~\Cref{prop:convex-feature-examples}) in $\cat{Hgph}^=_m$:
consider $(V,E)\injto{\iota}(V',E')$ and suppose $\iota(\{e\}, (V,E))\in \F^x$: there exists $v'\in \iota(e) \subseteq V'$ such that $\forall e'\in E'\bs\{\iota(e)\},\ v'\notin e'$.

As we consider $\cat{Hgph}^=_m$, we have that $\iota$ is an injective hypergraph morphism that preserves the size of the hyperedges.
As a consequence, it acts as a bijection between the set of $e$ and the set of $\iota(e)$:
thus there exists $v\in e$ such that $v' = \iota(v)$.
\smallskip

Let $\tilde e \in E\bs\{e\}$.
By injectivity of $\iota$, we have that $\iota(\tilde e) \neq \iota (e)$.
As $\iota(e)$ has an exclusivity (which is $v'$), we get that $v' =\iota(v)\notin \iota(\tilde e)$.
\add{Therefore, $v\notin \tilde e$ (by the third property of hypergraph morphism)}.

Thus, $e$ has an exclusivity in $(V,E)$ (which is $v$), hence, $(\{e\}, (V,E))\in \F^x$.
As a conclusion, $\F^x$ is left-continued.
            \end{proof}
            However, $\F^x$ is not convex in $\cat{Hgph}^\leq_m$ (see~\Cref{fig:hyper-filtration-1}).


        \item \emph{The max-originality feature}. The authors of~\cite{shibayama-originality} defined the \textit{originality} of a node in a directed graph by observing the relations between the ``inward'' neighborhood and the ``outward'' neighborhood.
            Inspired by this definition, we define the \textit{max}-originality $O_H(e)$ of a hyperedge $e$ in $H$:
            $$ O_H(e) = 1-\frac{1}{|e|}\max_{e'\in N(e)}|e \cap e'| \qquad \tx{or}\quad O_H(e)=1 \quad \tx{if $N(e)=\emptyset$}$$
            The max-originality feature is then defined as follows:
            $$ \F^{O} = \left\{ (A,H), \tx{with}\ H=(V,E)\ \tx{such that}\ A=\{e\}\subseteq E\ \tx{and}\ O_H(e)>\half \right\}$$
            Intuitively, a hyperedge $e$ is represented in $\F^{O}$ if no other hyperedge contains more than half of the nodes in $e$.

            \begin{proposition}\label{prop:convex-max-original}
                $\F^{O}$ is convex in $\cat{Hgph}^=_m$.
            \end{proposition}
            \begin{proof}
We show that $\F^{O}$ is left-continued (and hence convex, see~\Cref{prop:convex-feature-examples}) in $\cat{Hgph}^=_m$.
Let $H\injto{\iota}H'$ and suppose that $\iota({e},H)\in \F^{O}$ (in particular, $O_{H'}(\iota(e))>1/2$).
Let $\tilde e\in N(e)$ and $v\in e\cap \tilde e$.

$\iota$ is a hypergraph morphism, so we have $\iota(v) \in \iota(e)\cap\iota(\tilde e)$, thus, $\iota(\tilde e)\in N(\iota(e))$.
$\iota$ is injective so we obtain that $|e\cap \tilde e|\leq |\iota(e)\cap\iota(\tilde e)|$, hence $\max_{e'\in N(e)}|e\cap e'|\leq \max_{e'\in N(e)}|\iota(e)\cap \iota(e')| \leq \max_{e''\in N(\iota(e))}|\iota(e)\cap e''|$.
Moreover, $\iota$ preserves the size of $e$ so $|e| = |\iota(e)|$.
All in all we get:
\begin{align*}
    O_H(e) &= 1-\frac{1}{|e|}\max_{e'\in N(e)}|e \cap e'|\\
    &\geq\ 1-\frac{1}{|\iota(e)|}\max_{e''\in N(\iota(e))}|\iota(e) \cap e''|\ =\ O_{H'}(\iota(e))\\
    &> 1/2
\end{align*}
As a conclusion, $({e},H)\in \F^{O}$.

            \end{proof}
            However, $\F^{O}$ is not convex in $\cat{Hgph}^\leq_m$ (see~\Cref{fig:hyper-filtration-1}).
            \begin{figure}[!htb]
              \centering
                
              \begin{tabular}{ccc}
              \includegraphics[height = 0.11\textheight]{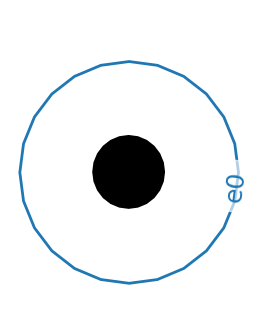} &
              \includegraphics[height = 0.11\textheight]{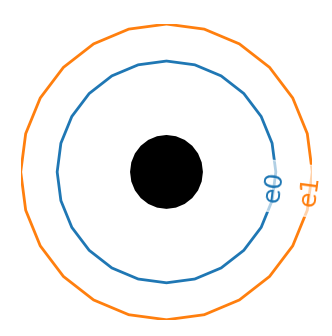} &
              \includegraphics[height = 0.11\textheight]{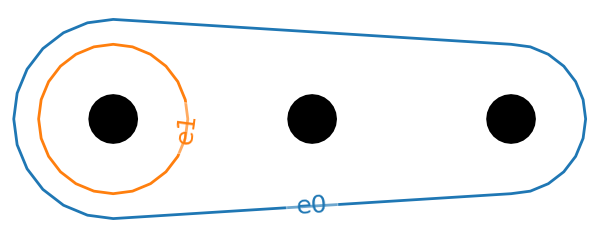}\\
              ($H_0$) & ($H_1$)& ($H_2$) 
              \end{tabular}
              \caption{A hypergraph filtration $H_0 \injto H_1 \injto H_2$ in $\cat{Hgph}^\leq_m$.
              $e_0$ has an exclusivity in $H_0$ and $H_2$, but not in $H_1$. This implies that $\F^x$ is not convex in $\cat{Hgph}^\leq_m$ (nor is it in $\cat{Hgph}_m$).
              Moreover, the max-originality values of $e_0$ are $O_{H_0}(e_0) = 1$, $O_{H_1}(e_0) = 0$ and $O_{H_2}(e_0) = 2/3$.
              Hence, the feature $\F^{O}$ is not convex in $\cat{Hgph}^\leq_m$ (nor is it in $\cat{Hgph}_m$).}
              \label{fig:hyper-filtration-1}
            \end{figure}
    \end{enumerate}

    \subsection{Experiments}

    In this section, we present some results of steady and ranging persistence obtained using the hypergraph features defined in the previous section.
    For these experiments, we build hypergraphs from the Hyperbard dataset~(\cite{hyperbard}), which consists of data about Shakespeare plays.
    For a given play, we build a hypergraph whose vertices are characters and whose hyperedges correspond to scenes.
    A vertex belongs to a hyperedge if the corresponding character appears in the corresponding scene.
    We call this hypergraph a \textit{scene-hypergraph}.

    We also consider the dual of the scene-hypergraph (namely the \textit{character-hypergraph}), whose vertices are scenes and whose hyperedges are characters.
    In the character-hypergraph, a hyperedge contains a vertex if the corresponding character appears in the corresponding scene.

    The filtration is chronological: the $i$-th scene appears at $t=i$, and a character appears simultaneously with the first scene it is in.
    This construction induces that the scene-hypergraph filtration is a filtration of $\cat{Hgph}^=_m$, whereas the character-hypergraph filtration is a filtration of $\cat{Hgph}^\leq_m$.

    Indeed, if $f:H\injto H'$ in a scene-hypergraph filtration, then $|f(e)|=|e|$ because a scene cannot gain a new character after occurring.
    Similarly, if $f:H\injto H'$ in a character-hypergraph filtration, and $u\notin e$ (i.e., character $e$ does not appear in scene $u$), then $f(u)\notin f(e)$ (i.e., character still does not appear in the scene).

    The results obtained from the scene-hypergraph and character-hypergraph filtrations derived from the play \textit{King Lear} are presented here.
    Other results from \textit{Romeo and Juliet} are shown in Appendix (\Cref{sect:appendix-results}).

    The experiments were coded in Python, using the HyperNetX library~(\cite{hypernetx}) to handle hypergraphs.
    Hypergraph graphical representations were also created using HyperNetX.
    Part of the persistence code was borrowed from the python package developed in~\cite{bergomi-steady-ranging}.
    The code for the experiments is publicly available in an online repository, see~\cite{ys-github-steady-ranging-persistence}.
    
    \begin{figure}[H]
        \centering
        \includegraphics[width = 0.7\textwidth]{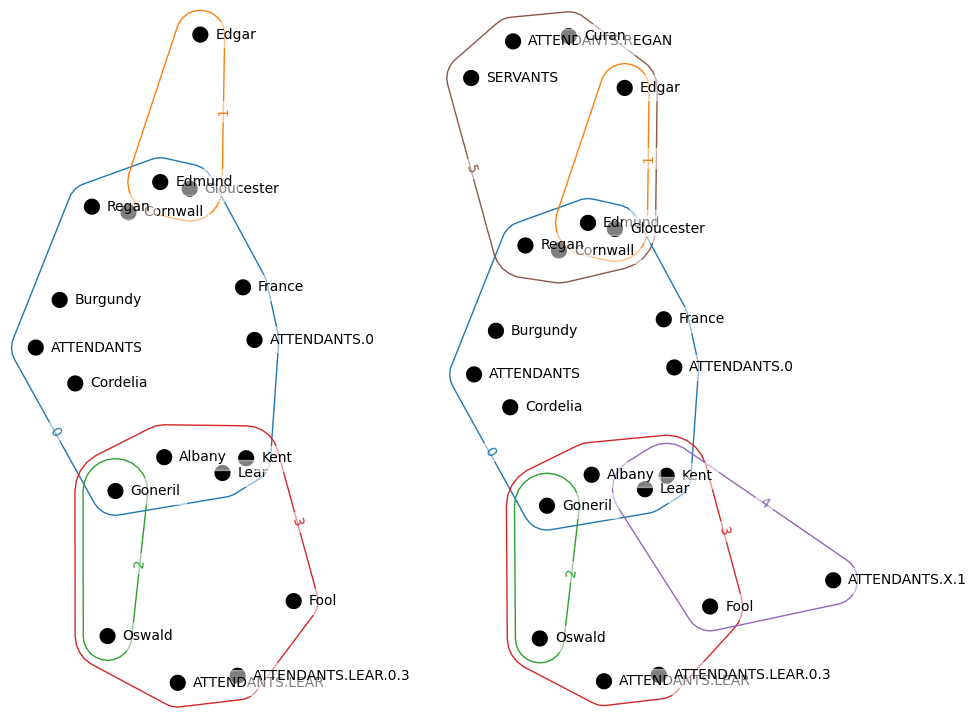}
          \caption{The scene-hypergraph filtration of \textit{King Lear} at $t=3$ and $t=5$. The play has 26 scenes, so the filtration starts at $t=0$, and is constant after $t=25$. The hypergraphs rapidly start being dense and unreadable.}
          \label{fig:filtration-scene}
    \end{figure}
    \paragraph{Results for the scene-hypergraph of \textit{King Lear}}    

    \begin{figure}[H]
        \caption{Steady persistence of the hub feature $\F^h$ for the scene-hypergraph filtration induced by \textit{King Lear}.
        The 0-th scene starts being a hub at $t=1$, and stops being a hub between $t=6$ and $t=16$ before becoming a hub again at $t=25$.
        This induces a difference between the steady and ranging diagrams, which illustrates that $\F^h$ is not convex in $\cat{Hgph}^=_m$ (see~\Cref{fig:ranging-hubs-scene-king_lear}).}
        \centering
        \includegraphics[width =0.7\textwidth]{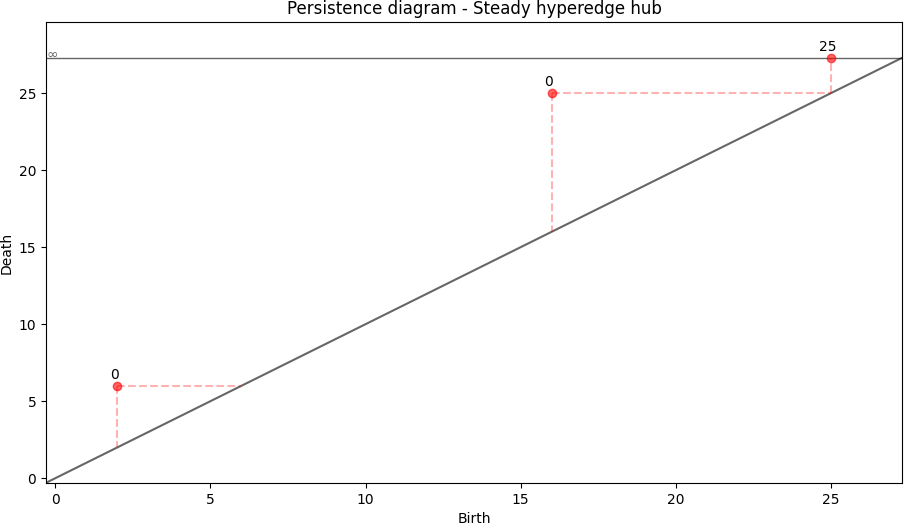}
        \label{fig:steady-hubs-scene-king_lear}
    \end{figure}

    \begin{figure}[H]
        \caption{Ranging persistence of the hub feature $\F^h$ for the scene-hypergraph filtration induced by \textit{King Lear}.
        The first and the last scene (0-th and 25-th scenes) are the only hubs that appear in the filtration.}

        \centering
        \includegraphics[width =0.7\textwidth]{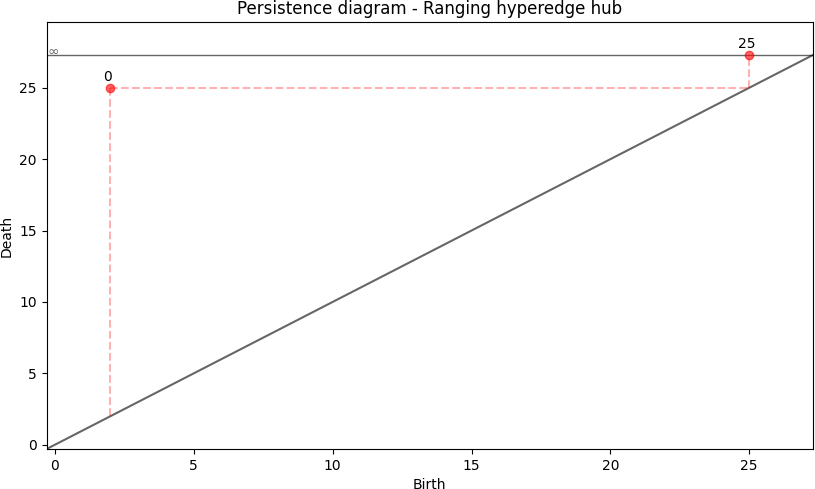}
        \label{fig:ranging-hubs-scene-king_lear}
    \end{figure}

    \begin{figure}[H]
        \caption{Persistence of $\F^x$ for the scene-hypergraph filtration induced by \textit{King Lear}. 
        This feature is convex in $\cat{Hgph}^=_m$, so the steady and ranging persistence diagrams are equal (see~\Cref{coro:convex-balanced-equivalence}).
        In the scene-hypergraph, the exclusivity feature represents the scenes that feature a unique character.
        This persistence diagram is rich because most of the scenes introduce a new character.}

        \centering
        \includegraphics[width = 0.7\textwidth]{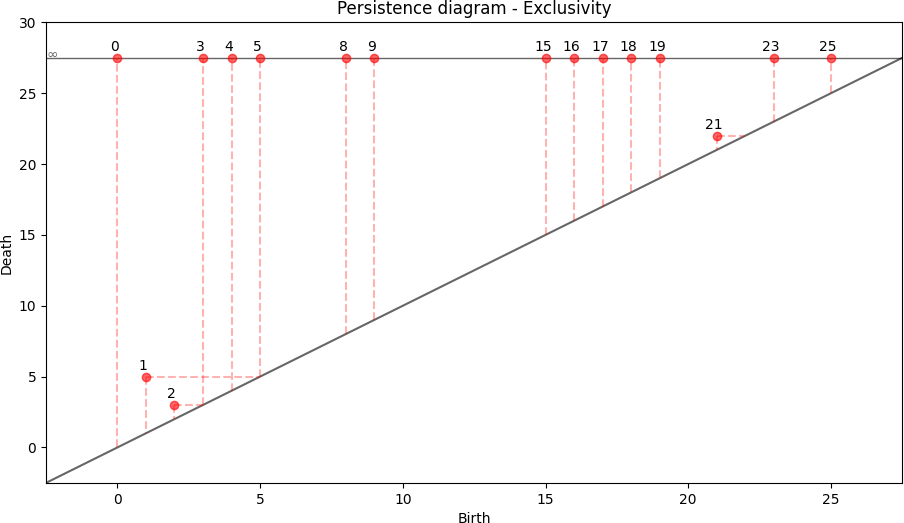}
        \label{fig:steady-exclusivity-scene-king_lear}
    \end{figure}

    \begin{figure}[H]
        \caption{Persistence of $\F^O$ for the scene-hypergraph filtration induced by \textit{King Lear}.
        This feature is convex in $\cat{Hgph}^=_m$ so the steady and ranging persistence diagrams are equal (see~\Cref{coro:convex-balanced-equivalence}).
        This diagram is sparse because the only max-original scenes are the 0-th, the 16-th and the 25-th scene.}

        \centering
        \includegraphics[width = 0.7\textwidth]{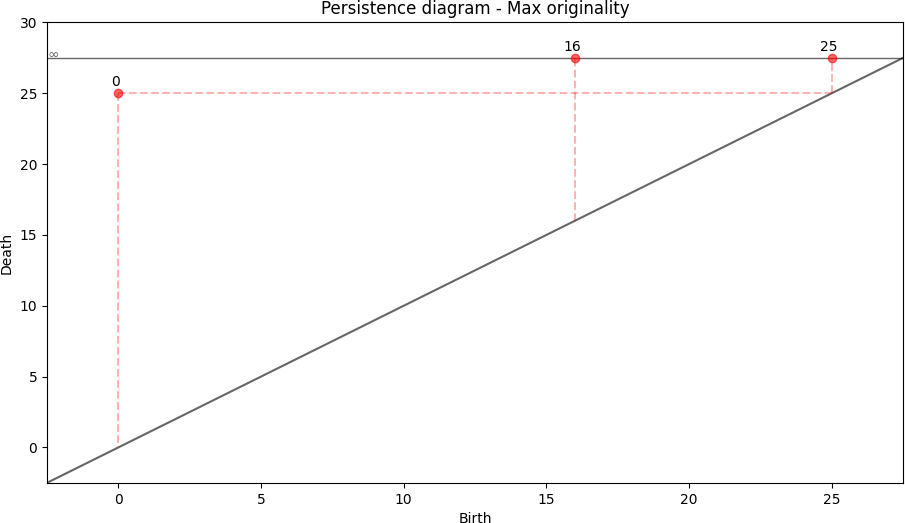}
        \label{fig:steady-max-originality-scene-king_lear}
    \end{figure}
    
    \smallbreak
    \paragraph{Results for the character-hypergraph of \textit{King Lear}}
    \begin{figure}[H]
        \centering
        \includegraphics[width = \textwidth]{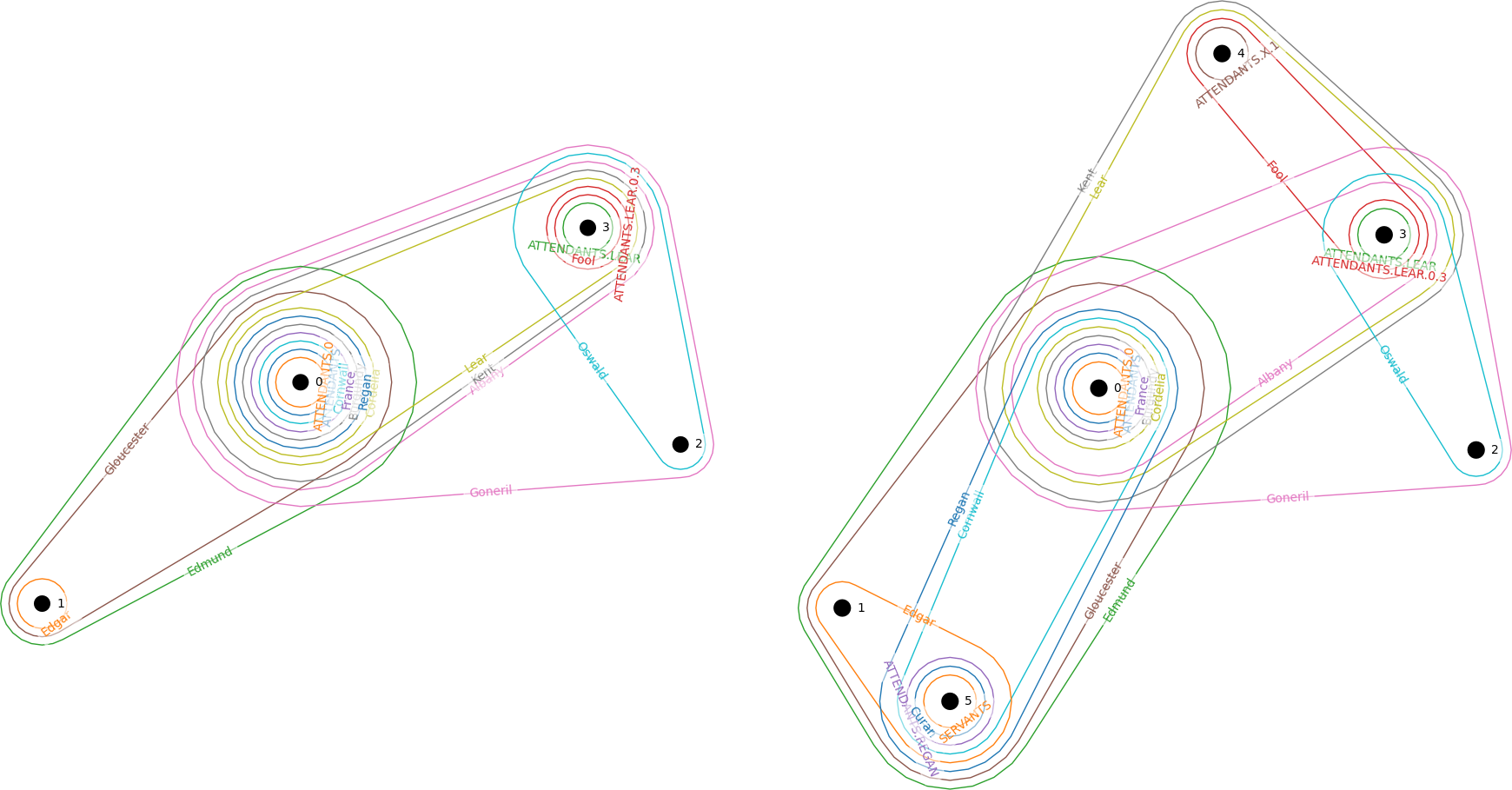}
          \caption{The character-hypergraph filtration of \textit{King Lear} at $t=3$ and $t=5$.
          The second hypergraph is the dual of the first hypergraph of~\Cref{fig:filtration-scene}.}
          \label{fig:filtration-charac}
    \end{figure}

    \begin{figure}[H]
        \caption{Steady persistence of the hub feature $\F^h$ for the character-hypergraph filtration induced by \textit{King Lear}.
        The character hubs are \textit{Kent}, \textit{Gloucester} and \textit{Goneril}.
        \textit{Kent} stops being a hub between $t=8$ and $t=9$.
        This induces a difference between the steady and ranging diagrams, which illustrates the non-convexity of $\F^h$ in $\cat{Hgph}^\leq_m$.}

        \centering
        \includegraphics[width =0.7\textwidth]{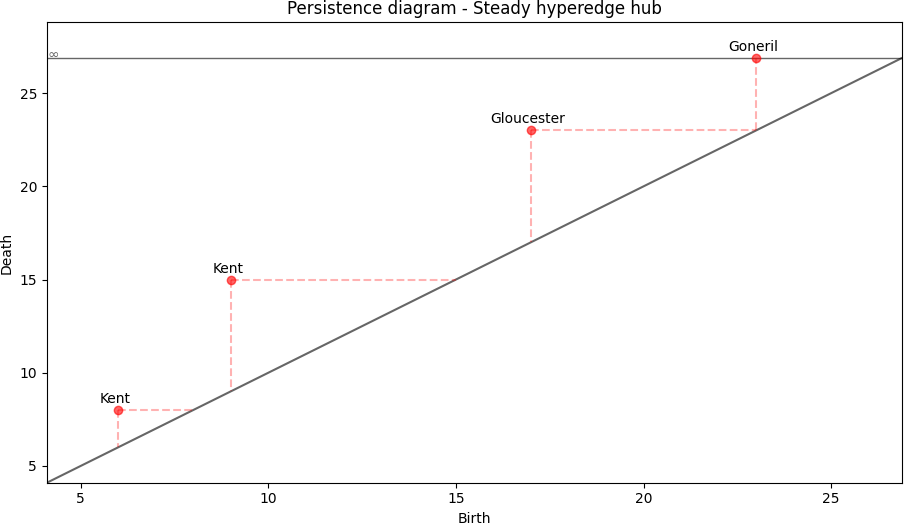}
        \label{fig:steady-hubs-character-king_lear}
    \end{figure}

    \begin{figure}[H]
        \caption{Ranging persistence of the hub feature $\F^h$ for the character-hypergraph filtration induced by \textit{King Lear}.}

        \centering
        \includegraphics[width =0.7\textwidth]{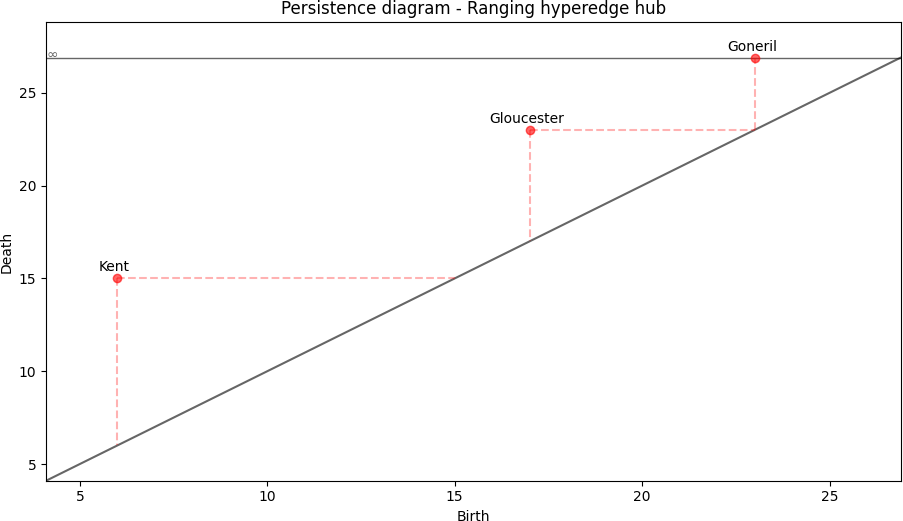}
        \label{fig:ranging-hubs-character-king_lear}
    \end{figure}

    \begin{figure}[H]
        \caption{Steady persistence of $\F^x$ for the character-hypergraph filtration induced by \textit{King Lear}.
        In the character-hypergraph, the exclusivity feature represents the characters that are the only character for some scene, i.e.\ the characters who have a monologue scene.
        In this play, only \textit{Edgar} has a monologue scene.
        Although $\F^x$ is not convex in $\cat{Hgph}^\leq_m$ (see~\Cref{fig:hyper-filtration-2}), the steady and ranging persistence diagrams for this filtration are equal.}

        \centering
        \includegraphics[width = 0.7\textwidth]{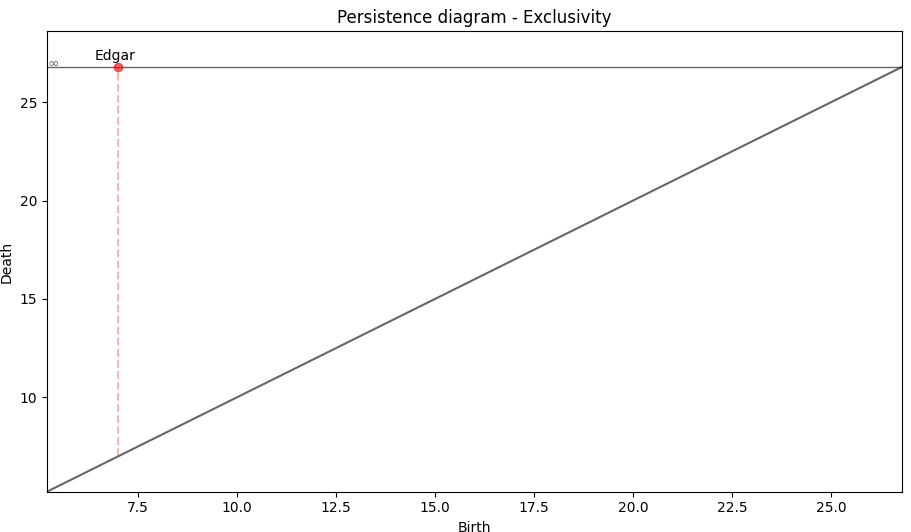}
        \label{fig:steady-exclusivity-character-king_lear}
    \end{figure}

    \begin{figure}
        \centering
        \includegraphics[width = 0.7\textwidth]{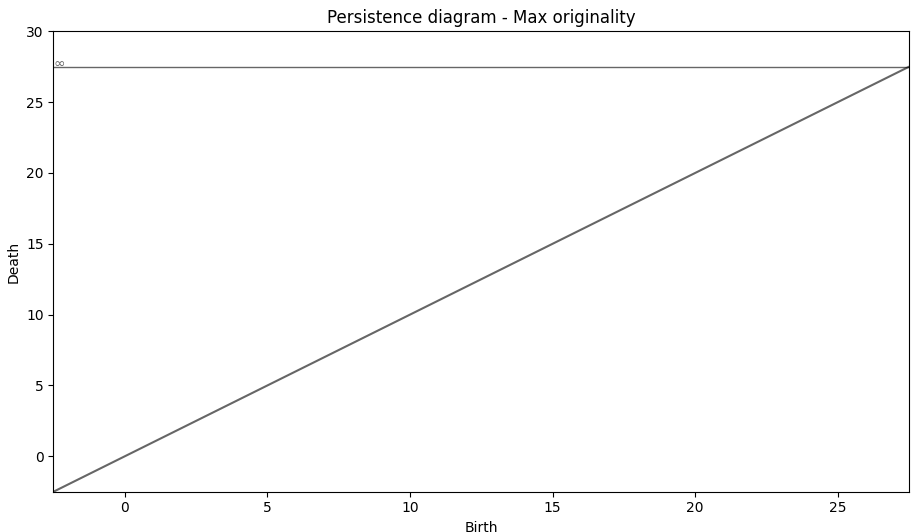}
        \caption{Steady persistence of $\F^O$ for the character-hypergraph filtration induced by \textit{King Lear}.
        The persistence diagram is empty, which means that no character is max-original, i.e.\ for every character $e$, there is another character $e'$ that appears in more than half of the scenes of $e$.
        Although $\F^O$ is not convex in $\cat{Hgph}^\leq_m$ (see~\Cref{fig:hyper-filtration-2}), the steady and ranging persistence diagrams for this filtration are equal.}
        \label{fig:steady-max-originality-character-king_lear}
    \end{figure}

\section{Conclusion}\label{sect:conclusion}

  %


    Steady and ranging persistence is a data analysis method based on generalized persistence and introduced in~\cite{bergomi-steady-ranging} to study features of graph filtrations.
    The present paper extends and investigates this theory.
    Our contribution is twofold.
    On the one hand, we extended the scope of steady and ranging persistence to consider different objects, such as simplicial complexes and hypergraphs.
    More precisely, we extended this persistence theory from graphs to finitely concrete categories and presented results and examples for hypergraph categories.
    We hope that our viewpoint will provide new insights into persistent data analysis.
    
    On the other hand, we investigated the stability of steady and ranging persistence.
    \add{First, we analyzed to what extent balancedness is equivalent to stability.
    We show that for tame filtrations, stability implies balancedness, and the converse holds under a condition on the category of interest (see~\Cref{prop:tame-stable-implies-tame-balanced,prop:tame-balanced-implies-tame-stable-condition}).
    Second, we presented a characterization of the features that induce balanced generators (see our main result~\Cref{coro:convex-balanced-equivalence}).
    This characterization is stated directly in terms of the features themselves, without requiring reference to interleaved filtrations or their persistence functions.
    By doing so, it resolves the open question posed in~\cite{bergomi-steady-ranging} about finding a general hypothesis on $\F$ so that $\sigma^\F$ or $\rho^\F$ is balanced.}
    
    In addition, we demonstrate that the balancedness of the steady or ranging persistence of a feature is equivalent to the equality of its steady and ranging generators.
    In general, ranging persistence can always be recovered from steady persistence, whereas the converse is not necessarily true.
    This means that steady persistence diagrams are more ``diverse'' and carry more information than ranging diagrams.
    An interesting consequence of our main result is that no feature can induce steady persistence diagrams that are both stable and strictly more informative than their corresponding ranging diagrams.
    
    To extend this work further, one may want to relax the requirements for the category of interest.
    For instance, only~\Cref{thm:ranging-balanced-implies-convex} requires the existence of an empty-like element, so this assumption may probably be discarded.
    Similarly, it might be possible to relax the concreteness of the category, for example, by considering a finitely concrete functor $\Phi$ from $\cat C_m$ to $\cat{FinSet}$ instead of a functor to $\cat{FinSet}_m$.
    \add{It would also be relevant to study if balanced ip-generators are stable when the triangle condition is not satisfied.}
    Finally, it may be interesting to study the universality (see~\cite{lesnick-theory-interleaved-distance-multidimensional}) of the steady and ranging generators.
    
    \paragraph{Acknowledgments.}
    First, I would like to express my gratitude to Massimo Ferri for his wise advice and great support, and to Patrizio Frosini for his friendly welcome and supervision at the University of Bologna.
    I am also grateful to Eliot Tron and Clemens Bannwart for the insightful math talks we had.
    Finally, I would like to thank the reviewers, whose comments greatly improved this paper.
    %

    \section*{Statements and Declarations}
    %
    %
    \paragraph{Competing interests.}
     I declare that the authors have no competing interests that might be perceived to influence the results and/or discussion reported in this paper. 

\bibliographystyle{plain}
\bibliography{main}

\begin{appendices}

\section{Proofs}\label{sect:appendix-proof}
\subsection{Proofs of~\Cref{sect:prior-works}}
\begin{delayedproof}{lem:epsilon-close-charac}
\ \\
    \noindent {$\implies$.}\quad
    There exist $(\phi_w:F_w\to G_{w+\epsilon})_{w\in\R}$ and $(\psi_w:G_w\to F_{w+\epsilon})_{w\in\R}$ two families of monomorphisms such that for any $w\in\R$, the diagrams of~\Cref{def:epsilon-close} commute.

    Denote $N := \max (\sup f, \sup g)$.
    $G_u^v$ and $F_u^v$ are graph inclusions in $X$ and $X'$, so for every $u$ and $v$ greater than $N$ we have $\phi_u(x)=\phi_v(x)$ for every node or edge $x$ in $X$.
    Similarly, $\psi_u(x')=\psi_v(x')$ for all $x'$ in $X'$.
    
    Hence, $(\phi_u)_{u\in\R}$ and $(\psi_u)_{u\in\R}$ induce two morphisms $\phi:X\to X'$ and $\psi:X'\to X$ defined as $\phi = \phi_{N}$ and $\psi = \psi_{N}$.
    By commutativity, we have that $\phi\circ \psi = \id_{X'}$ and $\psi\circ \phi = \id_X$, so they are isomorphisms.
    
    Consider $x\in X$.
    We have $x\in F_{f(x)}$ and $\phi(x)\in G_{g(\phi(x))}$.
    By applying $g\circ \phi$ to $x$ and $f\circ \psi$ to $\phi(x)$, we obtain
    $$g(\phi(x)) \leq f(x)+\epsilon \qquad f(\psi(\phi(x))) \leq g(\phi(x))+\epsilon$$
    
    As $\psi\circ \phi = \id_X$, the two last inequalities give
    $$ f(x)-\epsilon \leq g(\phi(x)) \leq f(x)+\epsilon$$
    As a result, $sup_{x\in X}|f(x)-g(\phi(x))|\leq \epsilon$ with $\phi:X\to X'$ an isomorphism.
    
    \smallskip

    \noindent {$\Longleftarrow$.}\quad
    We define $\phi_w:= \phi_{|F_w}$ and $\psi_w:= \phi\mun_{|G_w}$.
    
    First, we show that we have $\phi_w:F_w\to G_{w+\epsilon}$ and $\psi_w:G_w\to F_{w+\epsilon}$.
    Let $x\in F_w$.
    As $sup_{x\in X}|f(x)-g(\phi(x))|\leq \epsilon$, we have that $g(\phi_w(x))\leq f(x)+\epsilon \leq w+\epsilon$.
    Therefore, $\phi_w(x)\in G_{w+\epsilon}$.
    Similarly, $x\in G_w$ implies $\psi_w(x)\in F_{w+\epsilon}$.
    
    Second, we prove that the diagrams commutes.
    Let $x$ be an element of $F_{w-\epsilon}$.
    We have $\psi_{w}\circ \phi_{w-\epsilon}(x) = \phi\mun_{|G_w}(\phi_{|F_{w-\epsilon}}(x)) = x = F_{w-\epsilon}^{w+\epsilon}(x)$.
    Let $x$ be an element of $F_{u}$.
    We have $\phi_v \circ F_u^v(x)= \phi(x) = G_{u+\epsilon}^{v+\epsilon}\circ \phi_u(x)$.
    
    By symmetry of $F$ and $G$, the four diagrams commute.

\end{delayedproof}

\subsection{Proofs of~\Cref{sect:extending-steady-ranging}}
\add{
\begin{delayedproof}{lem:tame-implies-finite-persistence}
Let $a_1 < a_2 < \dots < a_n$ be the sequence of critical values of $F$, and set $a_0=-\infty$ and $a_{n+1} = +\infty$.
\addd{We set $\pi$ to be either $\sigma^\F$ or $\rho^\F$.}

Lemma 4.4 of~\cite{bubenik-categorification-persistent-homology} implies that $F_w^{w'}$ is an isomorphism if $w$ and $w'$ are between the same consecutive critical values.
Hence, there is a natural isomorphism between $F$ and a filtration $\bar F$ which is constant on the intervals $]a_i,a_{i+1}[$: $\bar F_w^{w'} = \id$ if $w$ and $w'$ are in $]a_i,a_{i+1}[$.
Therefore, for fixed $\bar u$ and $\bar v$ in $\R$, the following functions are constant on every interval $]a_i,a_{i+1}[$ (see~\Cref{def:steady-ranging-set}):
$$u \leq \bar v \mapsto S_{\bar F}^\F(u\leq \bar v) \quad v \geq \bar u \mapsto S_{\bar F}^\F(\bar u\leq v) \quad u \leq \bar v \mapsto R_{\bar F}^\F(u\leq \bar v) \quad v \geq \bar u \mapsto R_{\bar F}^\F(\bar u\leq v)$$

As a result, $\pi(\bar F)$ is constant on all the open rectangles $]a_i,a_{i+1}[\times]a_j,a_{j+1}[$.
It implies that all cornerpoints of $Dgm(\pi(\bar F))$ outside the diagonal are in the finite grid $\{a_0,\dots, a_{n+1}\}^2$.
Indeed, being a point $(u,v)$ (with $u<v$) inside an open rectangle $]a_i,a_{i+1}[\times]a_j,a_{j+1}[$ or inside an open segment $\{a_i\}\times]a_j,a_{j+1}[$ (or $]a_i,a_{i+1}[\times \{a_j\}$) would imply $\mu(u,v) = 0$ by~\Cref{def:persistence-multiplicity}.

\addd{It remains to prove that cornerpoints above the diagonal have finite multiplicity.
This is due to the fact that persistence functions only take finite values, which is an implicit requirement in the definition of a persistence function (see~\Cref{rem:finite-multiplicities}).
Recall that, in our case, the persistence functions obtained from steady and ranging persistence take finite values because of the finiteness of $S_{\bar F}^\F(u \leq v)$ and $R_{\bar F}^\F(u \leq v)$ (due to the finitely concrete functor).}

All in all, the persistence diagram of $\pi(\bar F)$ is finite.
As ip-generators are invariant under natural isomorphisms (see~\Cref{def:ip-generator}), $\pi(F) = \pi(\bar F)$ is finite.
\end{delayedproof}
}

\add{
\begin{delayedproof}{prop:tame-stable-implies-tame-balanced}
Let $F$ and $G$ be two $\epsilon$-interleaved tame filtrations.
We prove that $\pi(F)$ and $\pi(G)$ are $\epsilon$-compatible.

As $\pi$ is tame-stable, we have that the bottleneck distance is inferior to the interleaving distance, hence, $d_B(\pi(F), \pi(G)) \leq \epsilon$.
Moreover, $F$ is tame so $Dgm(\pi(F))$ has a finite number of cornerpoints outside the diagonal (see~\Cref{lem:tame-implies-finite-persistence}), so there exists a bijection $\gamma$ between the persistence diagrams $Dgm_F := Dgm(\pi(F))$ and $Dgm_G := Dgm(\pi(G))$ realizing the bottleneck distance: $\sup_{p \in Dgm_F}||p - \gamma(p)||_\infty = d_B(\pi(F), \pi(G))$, in particular, $\forall p\in Dgm_F, ||p - \gamma(p)||_\infty \leq \epsilon$.
This implies the following inclusion:
\begin{equation}\label{eq:tame-stable-implies-tame-balanced-1}
  \left\{ \gamma(p)\ /\ p\in Dgm_F\ \tx{with}\begin{cases}
       p_u < u-\epsilon\\
       p_v > v+\epsilon
      \end{cases}
      \right\} \subseteq 
    \left\{ p'\in Dgm_G\ \tx{with}\begin{cases}
       p'_u < u\\
       p'_v > v
      \end{cases}
    \right\}
\end{equation}
where $(p_u, p_v)$ denotes the coordinates of a point $p$ in a persistence diagram (considered as a multiset).
Then, fix $\bar u \leq \bar v \in \R$ and denote $\mu_F$ and $\mu_G$ the multiplicity functions of the diagrams $Dgm_F$ and $Dgm_G$ respectively.
\Cref{lem:radar,eq:tame-stable-implies-tame-balanced-1} gives:
\begin{align*}
    \pi(F)(\bar u - \epsilon \leq \bar v + \epsilon) &= \sum_{u<\bar u-\epsilon,\ v>\bar v+\epsilon} \mu_F(u,v)\\
    &= \left|\left\{ p\in Dgm_F\ \tx{with}\begin{cases}
         p_u < \bar u-\epsilon\\
         p_v > \bar v+\epsilon
        \end{cases}
        \right\}\right|\\
    &\leq \left|\left\{ p'\in Dgm_G\ \tx{with}\begin{cases}
       p'_u < \bar u\\
       p'_v > \bar v
      \end{cases}
    \right\}\right|\\
    &\leq \sum_{u<\bar u,\ v>\bar v} \mu_G(u,v) = \pi(G)(\bar u\leq\bar v)
\end{align*}
As a result, $\pi(F)$ and $\pi(G)$ are $\epsilon$-compatible.
\end{delayedproof}
}

\add{
\begin{delayedproof}{lem:interleaved-equiv}\ \\
\noindent\textbf{$\implies$.}\quad
Let $(\phi_w)_{w\in\R}$ and $(\psi_w)_{w\in\R}$ be the function of the $\epsilon$-interleaving between $F$ and $G$: $\phi_w : F_w \injto G_{w+\epsilon}$ and $\psi_w : G_w \injto F_{w+\epsilon}$.

Using $(\phi_w)_{w\in\R}$, we get that $G_\infty$ (along with the morphisms $(G_{w+\epsilon}^\infty \circ \phi_w)_{w\in\R}$) is a colimit of $F$.
By definition of $F_\infty$, we have $F_\infty = F_a$ for some $a\in\R$, so we define $\phi := G_{a+\epsilon}^\infty \circ \phi_a$ a monomorphism from $F_\infty$ to $G_\infty$.

We first show that $\phi$ is an isomorphism.
Colimits are unique up to isomorphism (see~\cite{leinster-basic-category-theory}), hence, we have that 
$F_\infty$ and $G_\infty$ are isomorphic.
Therefore, $\Phi\phi$ is a bijection because it is injective from $\Phi F_\infty$ to $\Phi G_\infty$, which have the same cardinal.
As a result, there exists $\chi := (\Phi\phi)^{-1} : \Phi G_\infty \to \Phi F_\infty$.
Using the triangle condition with $\Phi(\id_{G_\infty}) = \Phi\phi \circ \chi$ we obtain a monomorphism $\psi:G_\infty\to F_\infty$ such that $\id_{G_\infty} = \phi\circ\psi$.
Moreover, $\phi\circ \id_{F_\infty} = \id_{G_\infty}\phi  = \phi\circ\psi\phi$, which implies $\id_{F_\infty} = \psi\circ\phi$ by monomorphism property of $\phi$. 
Hence, $\phi$ is an isomorphism between $F_\infty$ and $G_\infty$.

Moreover, as $\phi_a F_w^a = G_{w+\epsilon}^a \phi_w$ (see~\Cref{def:epsilon-close}), we have:
\begin{equation}\label{eq:interleaved-equiv-1}
  \forall w,\quad G_{w+\epsilon}^\infty \circ \phi_w = \phi \circ F_w^\infty
\end{equation}

We now show that $\forall x \in \Phi F_\infty,\ |f_F(x) - f_G \circ \Phi\phi (x)| \leq \epsilon$.\\      
Let $x \in \Phi F_\infty$ such that $x\in \im\Phi F_u^\infty$.
There exists $y \in \Phi F_u$ such that $x = \Phi F_u^\infty (y)$.
Then,~\Cref{eq:interleaved-equiv-1} gives $\Phi\phi(x) = \Phi G_{u+\epsilon}^\infty ( \phi_u(y))$, therefore, $\phi(x) \in \im\Phi G_{u+\epsilon}^\infty$.

This implies that $f_G(\phi(x)) \leq f_F(x)+\epsilon$.
Using a similar reasoning on $\phi(x)$, we get $f_F(x) = f_F(\phi^{-1}(\phi(x))) \leq f_G(\phi(x))+\epsilon$.
As a result, $|f_F(x) - f_G \circ \Phi\phi (x)| \leq \epsilon$.

\medskip

\noindent\textbf{$\Longleftarrow$.}\quad
Let $x \in \Phi F_w$.
We define $x_\infty := \Phi F_w^\infty(x) \in \im\Phi F_w^\infty$, so $f_F(x_\infty) \leq w$.
As $||f_F - f_G \circ \Phi\phi||_\infty \leq \epsilon$, we have that $f_G(\Phi\phi(x_\infty)) \leq w+\epsilon$.
Hence, $\Phi\phi(x_\infty) \in \im\Phi G_{w+\epsilon}^\infty$.
Therefore, there exists a $y_x \in \Phi G_{w+\epsilon}$ such that $\Phi\phi(x_\infty) = \Phi G_{w+\epsilon}^\infty (y_x)$.
Such a $y_x$ is unique because $\Phi G_{w+\epsilon}^\infty$ is injective.

As a result, denote $\chi_w : x\in \Phi F_w \mapsto y_x \in \Phi G_{w+\epsilon}$.
$\chi$ is injective because $y_x = y_{x'} \implies \Phi\phi(x_\infty) = \Phi\phi(x'_\infty) \implies x_\infty = x'_\infty \implies x = x'$ as $\Phi\phi$, $\Phi F_w^\infty$ and $\Phi G_{w+\epsilon}^\infty$ are injective.

Using the triangle condition with the following diagram
\begin{center}
  \begin{tikzcd}
\Phi F_w \arrow[rd, "\Phi(\phi \circ F_w^\infty)"'] \arrow[rr, "\chi_w"] & & \Phi G_{w+\epsilon} \arrow[ld, "\Phi G_{w+\epsilon}^\infty"] \\
& \Phi G_\infty & 
\end{tikzcd}
\end{center}
we obtain a monomorphism $\phi_w : F_w \injto G_{w+\epsilon}$ such that:
\begin{equation}\label{eq:interleaved-equiv-2}
  G_{w+\epsilon}^\infty \circ \phi_w =  \phi \circ F_w^\infty
\end{equation}
By symmetry, we can also build a monomorphism $\psi_w : G_w \injto F_{w+\epsilon}$ such that:
\begin{equation}\label{eq:interleaved-equiv-3}
  F_{w+\epsilon}^\infty \circ \psi_w =  \phi^{-1} \circ G_w^\infty
\end{equation}

\smallskip

We now prove that $(\phi_w)_{w\in\R}$ and $(\psi_w)_{w\in\R}$ define an $\epsilon$-interleaving between $F$ and $G$.
We only prove that the two first diagrams of~\Cref{def:epsilon-close} commutes, as the two others are similar.
\begin{align*}
    F_{w+e}^\infty \circ \psi_w \phi_{w-e}\ &= \phi^{-1} G_w^\infty \phi_{w-e}    & \tx{by~\Cref{eq:interleaved-equiv-3}}\\ 
    &= \phi^{-1}  \phi F_{w-e}^\infty  & \tx{by~\Cref{eq:interleaved-equiv-2}}\\
    &= F_{w+e}^\infty \circ F_{w-e}^{w+e}\\
    F_{w+e}^\infty \circ F_{u+e}^{w+e} \psi_u\ &= F_{u+e}^\infty \psi_u = \phi^{-1} G_u^\infty    & \tx{by~\Cref{eq:interleaved-equiv-3}}\\ 
    &= \phi^{-1} G_w^\infty G_u^w \\
    &= F_{w+e}^\infty \circ \psi_w G_u^w  & \tx{by~\Cref{eq:interleaved-equiv-3}} 
\end{align*}
As $F_{w+e}^\infty$ is a monomorphism, we obtain that $\psi_w \phi_{w-e} = F_{w-e}^{w+e}$ and $F_{u+e}^{w+e} \psi_u = \psi_w G_u^w$.
\end{delayedproof}
}

\subsection{Proofs of~\Cref{sect:convex-balanced}}

\begin{delayedproof}{prop:convex-feature-charac}
\ \\
    \noindent {$\implies$.}\quad
    By~\Cref{prop:steady-ranging-basic}(\emph{0.}), we only need to prove that $R^\F \subseteq S^\F$.
    
    Let $F$ be a filtration and $(A,F_u)\in R^\F_F(u\leq v)$.
    Let $x\leq u$, $y \geq v$ and $A'$ be such that $A=\Phi(F_x^u)(A')$, $(A',F_x) \in\F$ and $F_u^{y}(A,F_{u}) \in\F$.
    
    Let $w \in [u,v]$.
    We show that $F_u^w(A,F_u)\in\F$:
    
    we have $(A',F_x)\in\F$ and $F_u^{y}(A,F_u) = F_u^{y}F_x^{u}(A',F_x) = F_x^{y}(A',F_x)\in\F$.
    $F_w^y F_x^w = F_x^y$ so by convexity we have $F_x^w(A',F_x) = F_u^w(A,F_u)\in\F$.

\smallskip

    \noindent {$\Longleftarrow$.}\quad
    Let $A$, $\iota:X\injto X'$ and $\iota':X'\injto X''$ such that $(A,X)\in\F\ \tx{and}\ \iota'\iota(A,X)\in\F$.
    We build the following filtration $F$:
    \begin{align*}
        F_u =
        \begin{cases}
            X &\tx{if $u\leq 0$}\\
            X' &\tx{if $u\in]0,2[$}\\
            X'' &\tx{if $u\geq2$}
        \end{cases}
    \end{align*}
    We have $(A,X)\in R^\F_F(0\leq 2)$ (by definition of ranging set with $x=u=0$, $y=v=2$, $A'=A$ and because $F_0^2=\iota'\iota$).
    Therefore, $(A,X)\in S^\F_F(0\leq 2)$.
    By definition of steady sets, this implies that $F_0^{1}(A,X) = \iota(A,X)\in\F$.

\end{delayedproof}

\begin{delayedproof}{prop:simple-convex-equivalence}
We prove the proposition using logical equivalences.
Let $X_1\injto{\iota}X_2\injto{\iota'}X_3$ be a part of a filtration and let $A$ be a subset of $\Phi(X_1)$ (where $\Phi$ is the functor associated to the finitely concrete mono category, see~\Cref{def:finitely-concrete-category}).
\begin{align*}
&\biggl( \iota\iota'(A,X_1)\in\F \ \tx{and} \ \iota(A,X_1)\notin\F \implies (A,X_1)\notin \F \biggr)  \\
&\equi
    \biggl(  (A,X_1)\in \F  \implies \Bigl( \iota\iota'(A,X_1)\notin\F \Bigr) \lor \Bigl( \iota(A,X_1)\in\F  \Bigr) \biggr)\\
&\equi \Bigl( (A,X_1)\notin \F  \Bigr) \lor \Bigl( \iota\iota'(A,X_1)\notin\F \Bigr) \lor \Bigl( \iota(A,X_1)\in\F  \Bigr)\\
&\equi \lnot \biggl( \Bigl( (A,X_1)\in \F  \Bigr) \land \Bigl( \iota\iota'(A,X_1)\in\F \Bigr)  \biggr)  \lor \Bigl( \iota(A,X_1)\in\F  \Bigr)\\
&\equi \biggl( \Bigl( (A,X_1)\in \F  \Bigr) \land \Bigl( \iota\iota'(A,X_1)\in\F \Bigr) \implies \iota(A,X_1)\in\F   \biggr)
\end{align*}
As a consequence:
\begin{align*}
&\F \ \tx{is simple} \\
&\equi \forall (X_1\injto{\iota}X_2\injto{\iota'}X_3),\ \biggl( \iota\iota'(A,X_1)\in\F \ \tx{and} \ \iota(A,X_1)\notin\F \implies (A,X_1)\notin \F \biggr) \\
&\equi \forall (X_1\injto{\iota}X_2\injto{\iota'}X_3),\ \biggl( \Bigl( (A,X_1)\in \F  \Bigr) \land \Bigl( \iota\iota'(A,X_1)\in\F \Bigr) \implies \Bigl( \iota(A,X_1)\in\F  \Bigr)  \biggr)\\
&\equi \F \ \tx{is convex}
\end{align*}

\end{delayedproof}

\begin{delayedproof}{prop:max-min-ric-feature}
    Let $A$ and $X\injto{\iota}X'\injto{\iota'}X''$ such that $(A,X)\in M\F$ and $\iota'\iota(A,X)\in M\F$.
    We show that $\iota(A,X)\in M\F$.
    
    First, by convexity of $\F$ (because $\F$ is right-continued and~\Cref{prop:convex-feature-examples}) we have $\iota(A,X)\in \F$.
    Hence, it remains to show the maximality of $\Phi\iota(A)$ in $\Phi X'$ with respect to $\F$.

    Let $B\subseteq \Phi X'$ such that $(B,X')\in\F$ and $\Phi \iota(A)\subseteq B$.
    By applying $\Phi \iota'$, we have $\Phi(\iota'\iota)(A)\subseteq \Phi \iota'(B)$.
    As $\F$ is right-continued we obtain $\iota'(B,X')=(\Phi \iota'(B),X'') \in\F$.

    By maximality of $\Phi(\iota'\iota)(A)$ in $X''$  w.r.t. $\F$ (because $\iota'\iota(A,X)\in M\F$) we obtain $\Phi \iota'(B)=\Phi(\iota'\iota)(A)$.
    By injectivity of $\Phi\iota'$ we get $\Phi\iota(A)=B$.
    This concludes the maximality of $\Phi\iota(A)$ in $\Phi X'$ w.r.t. $\F$.
    The proof of the convexity of $m\F$ uses the same arguments.

\end{delayedproof}

\begin{delayedproof}{thm:convex-implies-balanced}
    Let $\F$ be a convex feature. 
    Let $F$ and $G$ be two $\epsilon$-interleaved filtrations (with their associated monomorphism families $\phi$ and $\psi$).
    We show that $\sigma^\F_F$ and $\sigma^\F_G$ are $\epsilon$-compatible.
    By symmetry, we only need to show one side of $\epsilon$-compatibility.
    Specifically, we will prove that there is an injection from $S^\F_F(u-\epsilon\leq v+\epsilon)$ to $S^\F_G(u\leq v)$.

    Let $(A,F_{u-\epsilon})\in S^\F_F(u-\epsilon\leq v+\epsilon)$ and $(B, G_u) := \phi_{u-\epsilon}(A,F_{u-\epsilon})$.
    Let $w\in[u,v]$.
    By commutativity of the diagrams in~\Cref{def:epsilon-close}, we have $G_u^w(B,G_u) = \phi_{w-\epsilon} F_{u-\epsilon}^{w-\epsilon}(A,F_{u-\epsilon})$.
    \begin{itemize}
        \item We have $F_{u-\epsilon}^{w-\epsilon}(A,F_{u-\epsilon})\in\F$ because $(A,F_{u-\epsilon})\in S^\F_F(u-\epsilon\leq v+\epsilon)$ and $w-\epsilon$ is in $[u-\epsilon,v+\epsilon]$.
        \item We also have $\psi_w\phi_{w-\epsilon} F_{u-\epsilon}^{w-\epsilon}(A,F_{u-\epsilon})\in\F$ because it is equal to $F_{u-\epsilon}^{w+\epsilon}(A,F_{u-\epsilon})$ by commutativity and because $w+\epsilon$ is in $[u-\epsilon,v+\epsilon]$.
    \end{itemize}
    By convexity of $\F$, we obtain $\phi_{w-\epsilon} F_{u-\epsilon}^{w-\epsilon}(A,F_{u-\epsilon})= G_u^w(B,G_u) \in\F$.
    As a consequence, $(B,G_u)\in S^\F_G(u\leq v)$ and $\phi_{u-\epsilon}$ induces the following map:
    \begin{align*}
    \bar\phi_{u-\epsilon} :\ S^\F_F(u-\epsilon\leq v+\epsilon) &\to\ \ S^\F_G(u\leq v)\\
    (A,F_{u-\epsilon})\quad &\mapsto\ (B,G_u)\ := \phi_{u-\epsilon}(A,F_{u-\epsilon})\ = (\Phi\phi_{u-\epsilon}(A),G_u)
    \end{align*}
    The injectivity of $\Phi\phi_{u-\epsilon}$ concludes the proof. 

\end{delayedproof}

\begin{delayedproof}{thm:steady-balanced-implies-convex}
    Let $\F$ be a non-convex feature.
    By non-convexity there exist $X\injto{\iota}X'\injto{\iota'}X''$ and $A\subseteq X$ such that $(A,X)\in\F$ and $\iota'\iota(A,X)\in\F$ but $\iota(A,X)\notin\F$.

    We define the two following filtrations $F$ and $G$:
    \begin{align*}
        F_u = \begin{cases}
            X &\tx{if $u< 3$}\\
            X' &\tx{if $u\in[3,5[$}\\
            X'' &\tx{if $u\geq5$}
        \end{cases}
        \qquad
        G_u = \begin{cases}
            X &\tx{if $u< 4$}\\
            X'' &\tx{if $u\geq4$}
        \end{cases}
    \end{align*}
    \add{where $F_u^v$ equals either $\id$, $\iota$, $\iota'$ or $\iota'\iota$ and $G_u^v$ equals either $\id$ or $\iota'\iota$ depending on $(u\leq v)$.}
    We also define the two families of monomorphism $(\phi_w:F_w\to G_{w+1})_{w\in\R}$ and $(\psi_w:G_w\to F_{w+1})_{w\in\R}$ that link $F$ and $G$:
    \begin{align*}
        \phi_w =  \begin{cases}
            \id &\tx{if $w< 3$ or $w\geq 5$}\\
            \iota' &\tx{if $u\in[3,5[$}
        \end{cases}
        \qquad
        \psi_w =  \begin{cases}
            \id &\tx{if $w< 2$ or $w\geq 4$}\\
            \iota &\tx{if $u\in[2,4[$}
        \end{cases}
    \end{align*}

    This construction implies that $F$ and $G$ are $1$-interleaved.
    We will not detail the proof of this assumption, but the idea is to show that every possible diagram of the definition of $\epsilon$-interleaved~\Cref{def:epsilon-close} commutes.
    Instead, we provide the summary diagram below:
    \begin{center}
\adjustbox{scale=0.83}{%
\begin{tikzcd}
F: & \dots \arrow[r, "id"]                                     & F_1=X \arrow[rd, "id" description] \arrow[rr, "\iota"] &                                                                             & F_3=X' \arrow[rd, "\iota'" description] \arrow[rr, "\iota'"] &                                                                  & F_5=X'' \arrow[r, "id"] \arrow[rd, "id"] & \dots \\
G: & \dots \arrow[rr, "id"] \arrow[ru, "id" description] &                                                                    & G_2=X \arrow[ru, "\iota" description] \arrow[rr, "\iota'\iota"] &                                                                                & G_4=X'' \arrow[ru, "id"] \arrow[rr, "id"] &                                                                 & \dots
\end{tikzcd}
}

    \end{center}

    Using the definition of steady-sets, we obtain the following characterizations:
    \begin{align*}
    S^\F_G(0\leq6) &= \left\{ (B,G_0)\ /\ \forall v \in [0,6],~G_0^v(B,G_0)\in\F \right\}\\
    &= \left\{ (B,X)\ /\  \begin{cases}
        \id(B,X) \in\F\\
        \iota'\iota(B,X) \in\F
    \end{cases} \right\}
    \\
    S^\F_F(1\leq5) &=  \left\{ (B,F_1)\ /\ \forall v \in [1,5],~F_1^v(B,F_1)\in\F \right\}\\
    &= \left\{ (B,X)\ /\  \begin{cases}
        \id(B,X) \in\F\\
        \iota(B,X) \in\F\\
        \iota'\iota(B,X) \in\F
    \end{cases} \right\}
    \end{align*}
    These characterizations directly imply that $S^\F_F(1\leq5)\subseteq S^\F_G(0\leq6)$.

    However, we have $(A,X)\in\F$ and $\iota'\iota(A,X)\in\F$ but $\iota(A,X)\notin\F$.
    This induces $(A,X)\in S^\F_G(0\leq6)\bs S^\F_F(1\leq5)$, therefore $S^\F_F(1\leq5) \subsetneq S^\F_G(0\leq6)$.
    As these two sets are finite (because they are in bijection with subsets of $\Phi(F_1)$ and $\Phi(G_0)$, which are finite), we obtain:
    $$ \left| S^\F_F(1\leq5) \right| < \left| S^\F_G(0\leq6) \right| $$

    As a result, $F$ and $G$ are $1$-interleaved but $\sigma^\F_F$ and $\sigma^\F_G$ are not $1$-compatible, hence $\F$ is not steady-balanced.

\end{delayedproof}

\begin{delayedproof}{thm:ranging-balanced-implies-convex}
    For any object $X$ of we denote $\vec{\emptyset}^X$ the one and only morphism from $\hat\emptyset$ to $X$ (as $\hat\emptyset$ is an initial object).

    Let $\F$ be a non-convex feature.
    By non-convexity there exist $X\injto{\iota}X'\injto{\iota'}X''$ and $A\subseteq X$ such that $(A,X)\in\F$ and $\iota'\iota(A,X)\in\F$ but $\iota(A,X)\notin\F$.

    We define the two following filtrations $F$ and $G$:
    \begin{align*}
        F_u = \begin{cases}
            \hat\emptyset &\tx{if $u< 1$}\\
            X &\tx{if $u\in[1, 3[$}\\
            X' &\tx{if $u\in[3,5[$}\\
            X'' &\tx{if $u\geq5$}
        \end{cases}
        \qquad
        G_u = \begin{cases}
            \hat\emptyset &\tx{if $u< 2$}\\
            X' &\tx{if $u \in[2,6[$}\\
            X'' &\tx{if $u\geq6$}
        \end{cases}
    \end{align*}
    \add{where $F_u^v$ equals either $\vec{\emptyset}^X$, $\id$, $\iota$, $\iota'$ or $\iota'\iota$ and $G_u^v$ equals either $\vec{\emptyset}^{X'}$, $\id$ or $\iota'$ depending on $(u\leq v)$.}

    We also define the two families of monomorphisms $(\phi_w:F_w\to G_{w+1})_{w\in\R}$ and $(\psi_w:G_w\to F_{w+1})_{w\in\R}$ that link $F$ and $G$:
    \begin{align*}
        \phi_w =  \begin{cases}
            \id &\tx{if $w< 1$ or $w\geq 3$}\\
            \iota &\tx{if $w\in[1,3[$}
        \end{cases}
        \qquad
        \psi_w =  \begin{cases}
            \vec{\emptyset}^X &\tx{if $w< 2$}\\
            \id &\tx{if $w\in[2,4[$ or $w\geq 6$}\\
            \iota' &\tx{if $w\in[4,6[$}
        \end{cases}
    \end{align*}

    This construction implies that $F$ and $G$ are 1-interleaved.
    We will not detail the proof here as it just consists of proving that every possible diagram of the definition of $\epsilon$-interleaved~\Cref{def:epsilon-close} is commutative.
    Instead, we just provide the summary diagram below:
    \begin{center}
\adjustbox{scale=0.75}{%
\begin{tikzcd}
F_0=\emptyset \arrow[r, "\vec{\emptyset}", tail]                                                    & F_1=X \arrow[rd, "\iota" description, tail] \arrow[rr, "\iota", tail] &                                                                  & F_3=X' \arrow[rd, "id" description, tail] \arrow[rr, "\iota'", tail] &                                                                          & F_5=X'' \arrow[rd, "id" description, tail] \arrow[rr, "id", tail] &                                                                  & F_7=X'' \\
G_0 = \emptyset \arrow[rr, "\vec{\emptyset}", tail] \arrow[ru, "\vec{\emptyset}" description, tail] &                                                                       & G_2=X' \arrow[ru, "id" description, tail] \arrow[rr, "id", tail] &                                                                      & G_4=X' \arrow[ru, "\iota'" description, tail] \arrow[rr, "\iota'", tail] &                                                                   & G_6=X'' \arrow[ru, "id" description, tail] \arrow[r, "id", tail] & \dots  
\end{tikzcd}
}

    \end{center}

    Using the definition of ranging-sets, we obtain the following characterizations:
    \begin{align*}
    R^\F_F(3\leq7) &= \left\{ (B',F_3)\ /\ \exists x\leq 3,~y\geq 7\ \tx{and}\ B\ /\
    \begin{cases}
      \ (B,F_x)& \in\F\ \tx{with}\ B'=\Phi F_x^3(B)\\
      F_3^{y}(B',F_{3})& \in\F
    \end{cases}  \right\}
        \\
    &= \left\{ (B',X')\ /\ \exists x\leq 3\ \tx{and}\ B\ /\
    \begin{cases}
      \ (B,F_x) \in\F\ &\tx{with}\ B'=\Phi F_x^3(B)\\
      \iota'(B',X') \in\F
    \end{cases}  \right\}\\
& \quad\tx{because $F_3=X'$ and $y\geq 7$ implies that $F_3^y = \iota'$.}\\
    &= \left\{ (B',X')\ /\ \exists\ B\ /\
    \begin{cases}
      \ (B',X')\in\F &\tx{or}\ (B,X) \in\F\ \tx{with}\ B'=\Phi\iota (B) \\
      \iota'(B',X') \in\F
    \end{cases}  \right\}\\
& \quad\tx{because $F_x^3$ is either $\vec{\emptyset}^{X'}$, $\id$ or $\iota$.}\\
    &= \left\{ (B',X')\ /\
    \begin{cases}
      \ (B',X') \in\F\ &\tx{or}\ \exists\ B\ /\ B'=\Phi\iota (B)\ \tx{and}\ (B,X)\in\F\\
      \iota'(B',X') \in\F
    \end{cases}  \right\}
    \end{align*}

    Similarly, we have:
    \begin{align*}
    R^\F_G(4\leq6) &= \left\{ (B',G_4)\ /\ \exists x\leq 4,~y\geq 6\ \tx{and}\ B\ /\
    \begin{cases}
      \ (B,G_x)\in\F\ &\tx{with}\ B'=\Phi G_x^4 (B)\\
      G_4^{y}(B',G_4)\in\F
    \end{cases}  \right\}
        \\
    &= \left\{ (B',X')\ /\ \exists x\leq 4\ \tx{and}\ B\ /\
    \begin{cases}
      \ (B,G_x) \in\F\ &\tx{with}\ B'=\Phi G_x^4 (B) \\
      \iota'(B',X') \in\F
    \end{cases}  \right\}\\
& \quad\tx{because $G_4=X'$ and $y\geq 6$ implies $G_4^y = \iota'$.}\\
    &= \left\{ (B',X')\ /\
    \begin{cases}
      \ (B',X') \in\F \\
      \iota'(B',X') \in\F
    \end{cases}  \right\}\\
& \quad\tx{because $G_x^4 = \vec{\emptyset}^{X'}$ or $\id$, so $\Phi G_x^4$ is either $\vec{\emptyset}^{\Phi G_4}$ or $id$.}
    \end{align*}

    These characterizations directly imply that $R^\F_G(4\leq6)\subseteq R^\F_F(3\leq7)$.

    However, we have $(A,X)\in\F$ and $\iota'\iota(A,X)\in\F$ but $\iota(A,X)\notin\F$.
    This induces $\iota(A,X) \in R^\F_F(3\leq7)\bs R^\F_G(4\leq6)$ therefore $R^\F_G(4\leq6) \subsetneq R^\F_F(3\leq7)$.
    As those sets are finite (because they are in bijection with subsets of $\Phi F_1$ and $\Phi G_0$, which are finite), we obtain:
    $$ \left| R^\F_G(4\leq6) \right| < \left| R^\F_F(3\leq7) \right| $$

    As a result, $F$ and $G$ are $1$-interleaved but $\rho^\F_F$ and $\rho^\F_G$ are not $1$-compatible, hence $\F$ is not ranging-balanced.

\end{delayedproof}

\subsection{Proofs of~\Cref{sect:examples-constructions-hypergraphs}}
\add{
\begin{delayedproof}{prop:triangle-condition-hypergraph}
Suppose we have the following commutative diagram:
\begin{center}
\begin{tikzcd}
  \Phi H = V\sqcup E \arrow[rd, "\Phi \iota = \iota"'] \arrow[rr, "\chi"] &                          & \Phi H' = V'\sqcup E' \arrow[ld, "\Phi \iota'=\iota'"] \\
  & \Phi H'' = V''\sqcup E'' &                                                
\end{tikzcd}
\end{center}
First, we show that $\chi$ can also be considered as a monomorphism from $H = (V,E,h)$ to $H'=(V',E',h')$.
Precisely, we show that it satisfies the three properties of~\Cref{def:hypergraph-morphism}:
\begin{enumerate}
    \item $\chi(V)\subset V'$. Indeed, let $u\in V$.
    $\iota$ is a hypergraph monomorphism so we have $\iota(u) \in V''$. As $\iota(u) = \iota'(\chi(u))$, this gives $\iota'(\chi(u))\in V''$, which implies $\chi(u) \in V'$ because $\iota'$ is a hypergraph monomorphism.
    \item $\chi(E)\subset E'$ using the same argument.
    \item $u\in e \implies \chi(u)\in\chi(e)$. Indeed, let $u\in e$.
    We have $\iota(u) \in \iota(e)$ so $\iota'(\chi(u)) \in \iota'(\chi(e))$. As $\iota'$ is a monomorphism of $\cat{Hgph}^\leq_m$, we obtain $\chi(u)\in\chi(e)$.
\end{enumerate} 
Finally, we show that $\chi \in \morph(\cat{Hgph}^\leq_m)$. As $\iota$ and $\iota'$ are in $\morph(\cat{Hgph}^\leq_m)$ we have: $\chi(u)\in \chi(e) \implies \iota'(\chi(u)) \in \iota'(\chi(e)) \implies \iota(u) \in \iota(e) \implies u \in e$.
  
Similarly, if $\iota$ and $\iota'$ are monomorphisms of $\cat{Hgph}^=_m$, we obtain $\chi \in \morph(\cat{Hgph}^=_m)$: $|\chi(e)| = |\iota'(\chi(e))| = |\iota(e)| = |e|$.
\end{delayedproof}
}
\begin{proof}[\raisedtarget{mono-implies-injective} Proof of the injectivity of monomorphisms in $\cat{Hgph}$]
\ \\
Let $f$ be a monomorphism from $H=(V,E)$ to $H'=(V',E')$.
We show that $f$ is injective (that is $f_{|V}$ and $f_{|E}$ are injective).

\paragraph{$f_{|V}$ is injective:}
Let $x$ and $y$ be vertices of $V$ such that $f(x)=f(y)$.
Let $H_x=(\{x\}, \emptyset)$ (resp. $H_y=(\{y\}, \emptyset)$) be a hypergraph containing no hyperedge but only one vertex that is $x$ (resp. $y$).
Let $\iota_x$ (resp. $\iota_y$) be a hypergraph morphism from $H_x$ to $H$ that sends $x$ to $x$.

As $f(x)=f(y)$ we have that $f \circ \iota_x = f \circ \iota_y$, hence $\iota_x = \iota_y$ because $f$ is a monomorphism.
As a result $x = y$.

\paragraph{$f_{|E}$ is injective:}
Let $x$ and $y$ be hyperedges of $E$ such that $f(x)=f(y)$.
This time, we choose $v_x$ (resp. $v_y$) a vertex in $x$, and we build $H_x=(\{v_x\}, \{x\})$ (resp. $H_y=(\{v_y\}, \{y\})$).
We then use the same arguments as for $f_{|V}$: we build two simple morphisms $\iota_x$ and $\iota_y$ such that $f \circ \iota_x = f \circ \iota_y$, and we obtain $x = y$.

\end{proof}

\section{Other Experiments}\label{sect:appendix-results}
    \begin{figure}[!htb]
        \centering
        \includegraphics[width = 0.7\textwidth]{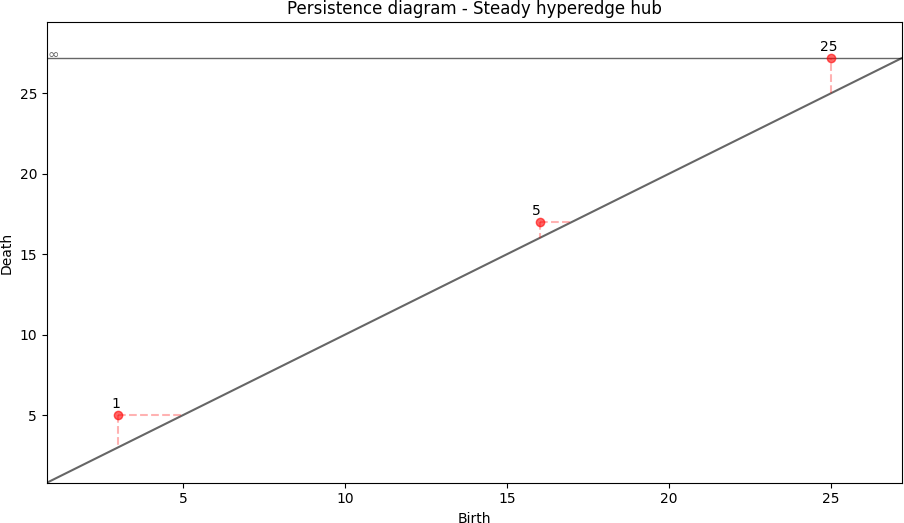}
        \includegraphics[width = 0.7\textwidth]{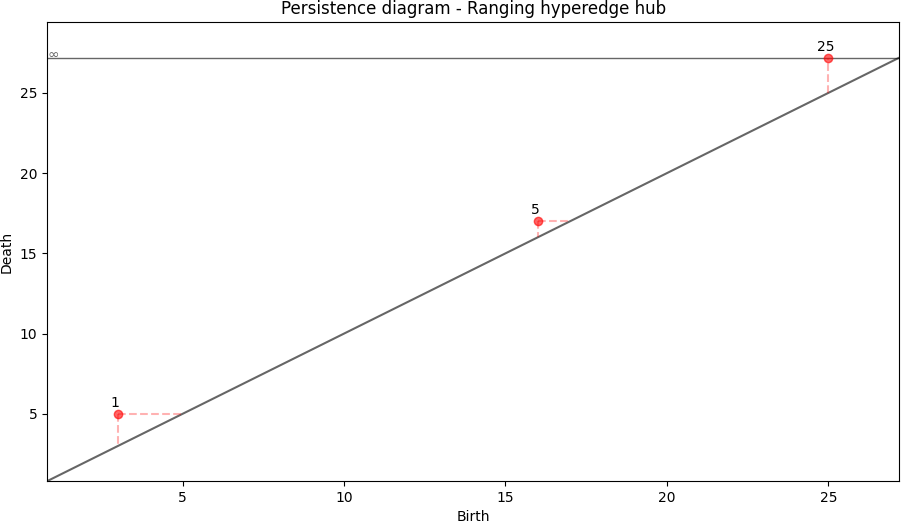}
          \caption{Steady and ranging persistence of the hub feature $\F^h$ for the scene-hypergraph filtration induced by \textit{Romeo and Juliet}. The two persistent diagrams are equal, even though $\F^h$ is not convex for $\cat{Hgph}^=_m$.}
          \label{fig:steady-ranging-hub-scene-rom_jul}
    \end{figure}

    \begin{figure}[!htb]
        \centering
        \includegraphics[width = 0.7\textwidth]{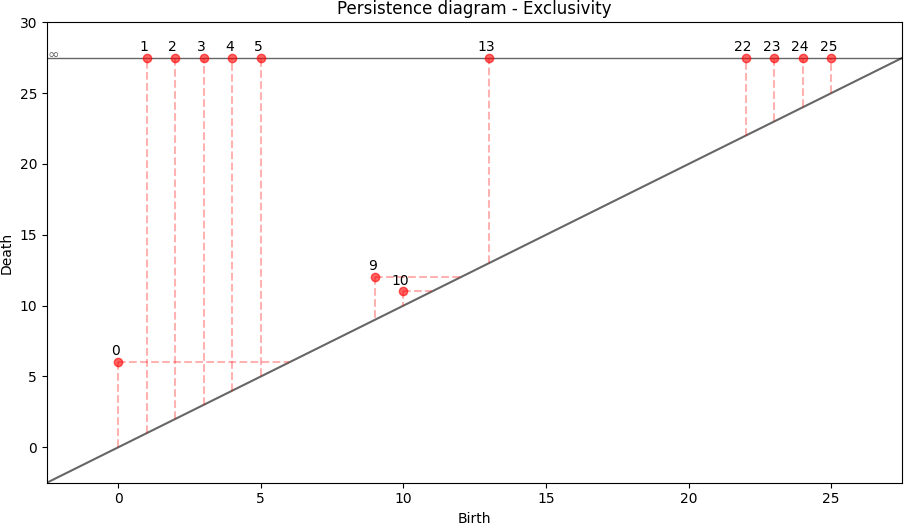}
        \includegraphics[width = 0.7\textwidth]{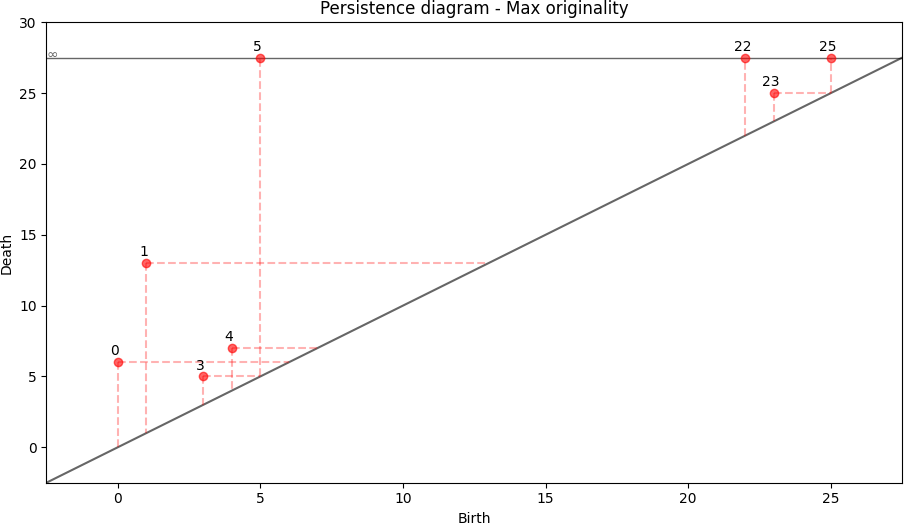}
          \caption{Persistence of the two convex features (for $\cat{Hgph}^=_m$) $\F^x$ and $\F^O$ for the scene-hypergraph filtration induced by \textit{Romeo and Juliet}.}
          \label{fig:steady-x-O-scene-rom_jul}
    \end{figure}

    \begin{figure}[!htb]
        \centering
        \includegraphics[width = 0.7\textwidth]{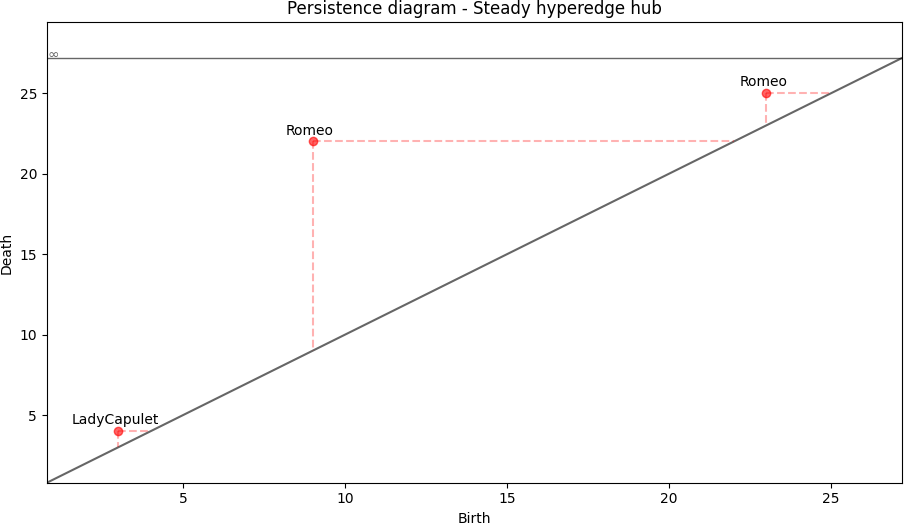}
        \includegraphics[width = 0.7\textwidth]{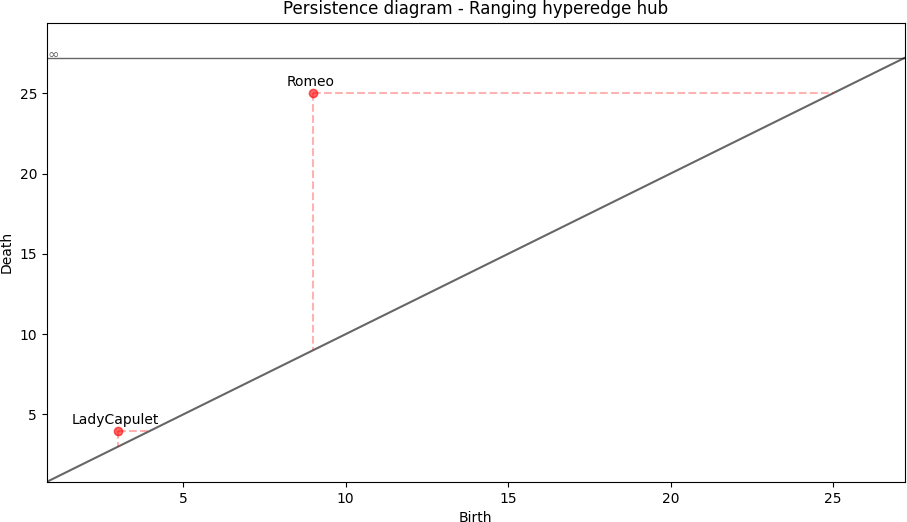}
          \caption{Steady and ranging persistence of the hub feature $\F^h$ for the character-hypergraph filtration induced by \textit{Romeo and Juliet}.}
          \label{fig:steady-ranging-hub-character-rom_jul}
    \end{figure}

    \begin{figure}[!htb]
        \centering
        \includegraphics[width = 0.7\textwidth]{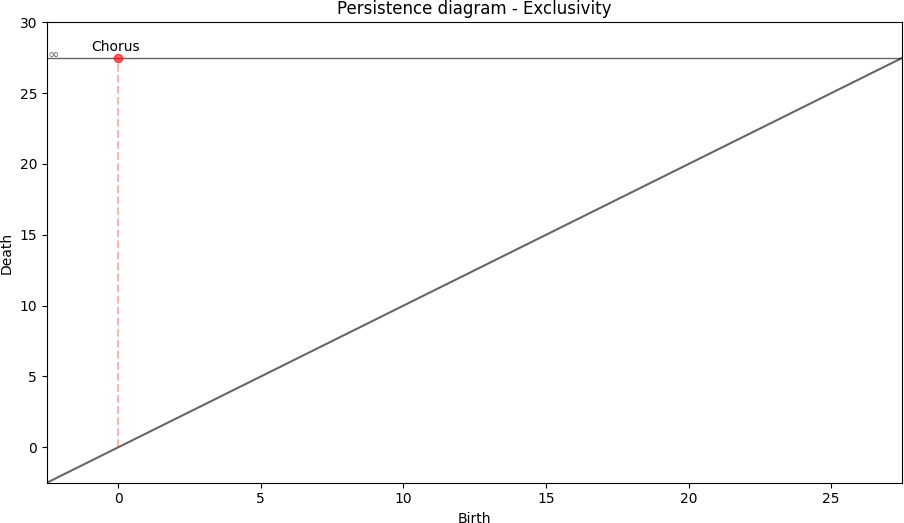}
        \includegraphics[width = 0.7\textwidth]{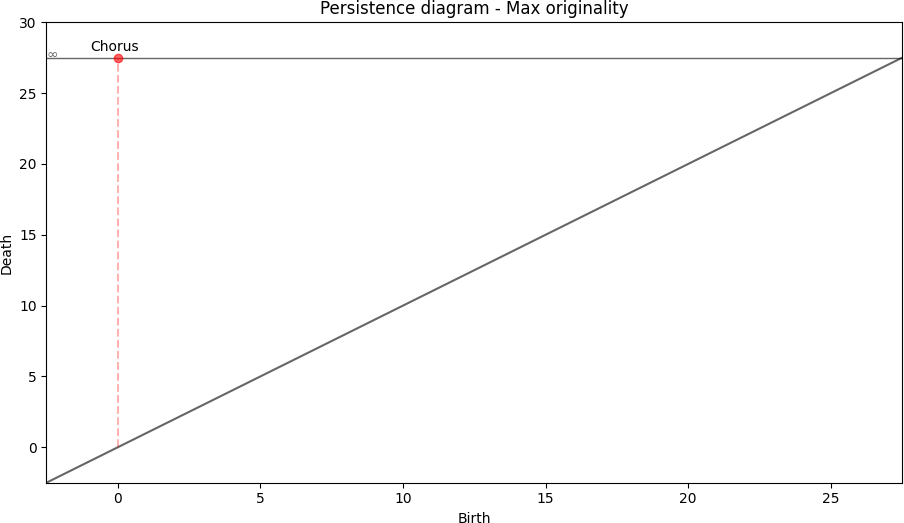}
          \caption{Persistence of the two features $\F^x$ and $\F^O$ for the character-hypergraph filtration induced by \textit{Romeo and Juliet}.
          Although these two features are not convex for $\cat{Hgph}^\leq_m$, their steady and ranging persistence diagrams for this filtration are equal.
          Note that every character-hypergraph filtration is in the category $dual(\cat{Hgph}^=)_m$ and $\F^x$ is convex for $dual(\cat{Hgph}^=)_m$ (claimed but not proved here).}
          \label{fig:steady-x-O-character-rom_jul}
    \end{figure}
\end{appendices}

\end{document}